\title[A dilogarithmic formula for the Cheeger--Chern--Simons class]{A dilogarithmic formula\\for the Cheeger--Chern--Simons class}
\author{Johan L\,Dupont}
\address{Department of Mathematics\\
University of Aarhus\\\newline
DK-8000 \AA rhus\\Denmark}
\email{dupont@imf.au.dk}
\urladdr{}
\author{Christian K\,Zickert}
\address{Department of Mathematics\\
Columbia University\\\newline
New York, NY 10027\\USA}
\email{zickert@math.columbia.edu}
\urladdr{}
\let\xysavmatrix\xymatrix
\def\xymatrix{\disablesubscriptcorrection\xysavmatrix}
\def\hatcB{\widehat{\mathcal B\mskip1mu}{}}
\def\cnewtheorem#1[#2]#3{\newtheorem{#1}{#3}[section]
\expandafter\let\csname c@#1\endcsname\c@thm}
\theoremstyle{plain}
\newtheorem{thm}{Theorem}[section]
\theoremstyle{definition}
\theoremstyle{remark}
\newtheorem*{remark*}{Remark}
\DeclareMathOperator{\Log}{Log}
\DeclareMathOperator{\Li}{Li_2}
\DeclareMathOperator{\FT}{FT}
\DeclareMathOperator{\Arg}{Arg}
\DeclareMathOperator{\Vol}{Vol}
\DeclareMathOperator{\Imag}{Im}
\DeclareMathOperator{\SL}{SL}
\DeclareMathOperator{\PSL}{PSL}
\DeclareMathOperator{\Hom}{Hom}
\DeclareMathOperator{\cut}{cut}
\newcommand{\F}{\mathbb F}
\newcommand{\Cremove}{\C\backslash\{0,1\}}
\newcommand{\B}{\mathcal B}
\newcommand{\Pre}{\mathcal P}
\numberwithin{equation}{section}
\def\cxymatrix#1{\xy*[c]\xybox{\xymatrix#1}\endxy}
\begin{document}

\begin{asciiabstract}
We present a simplification of Neumann's formula for the universal
Cheeger-Chern-Simons class of the second Chern polynomial. Our
approach is completely algebraic, and the final formula can be applied
directly on a homology class in the bar complex.
\end{asciiabstract}

\begin{htmlabstract}
We present a simplification of Neumann's formula for the universal
Cheeger&ndash;Chern&ndash;Simons class of the second Chern polynomial. Our
approach is completely algebraic, and the final formula can be
applied directly on a homology class in the bar complex.
\end{htmlabstract}

\begin{abstract} We present a simplification of Neumann's formula for the universal
  Cheeger--Chern--Simons class of the second Chern polynomial. Our
  approach is completely algebraic, and the final formula can be
  applied directly on a homology class in the bar complex.
\end{abstract}

\maketitle

\section*{Introduction}
In the famous papers
\cite{CheegerSimons} and \cite{ChernSimons}, J\,Cheeger, S\,Chern and
J\,Simons define characteristic classes for flat $G$--bundles. Each
such characteristic class is given by a corresponding universal
cohomology class in $H^*(BG^\delta,\C/\Z)$, where $\delta$ denotes
discrete topology.  The cohomology of the classifying space of a
discrete group is isomorphic to the Eilenberg--Maclane group
cohomo\-logy, and it has been a long standing problem to find explicit
formulas for the universal classes directly in terms of the bar
complex.  In \cite{Dupont}, the first author proved that the universal
Cheeger--Chern--Simons (C--C--S) class for the group $\SL(2,\C)$
associated to the second Chern polynomial is given up to a $\Q/\Z$ indeterminacy by a dilogarithmic
formula defined on the Bloch group $\B(\C)$. 

An element of $\B(\C)$ is a formal sum of
cross-ratios (see below), but the cross-ratio alone
does not seem to carry enough information to get rid of the $\Q/\Z$
indeterminacy on the real part. Neumann \cite{Neumann} constructs 
an extended Bloch group $\hatcB(\C)$, where elements, in addition to the
cross-ratio, also contain information of two choices of
logarithms. It follows from Neumann's article that this additional
information is exactly what is needed to remove the  $\Q/\Z$
indeterminacy. 
He shows that there is an isomorphism \[\lambda\co
H_3(\PSL(2,\C))\cong \hatcB(\C),\] and furthermore that there is a natural extension of the dilogarithmic formula
from \cite{Dupont} to $\hatcB(\C)$ such that the composition of
$\lambda$ with the dilogarithm is exactly the universal C--C--S class. 
The formula also gives a combinatorial formula for the volume and the Chern--Simons invariant of a complete hyperbolic manifold $M$ with finite volume, since the C--C--S class evaluated on the canonical flat $\PSL(2,\C)$--bundle over $M$ equals $i(\Vol + i\text{CS})$.  This is shown by Neumann and Yang \cite{NeumannYang}.

The isomorphism $\lambda$ is defined
by representing an element of $H_3(\PSL(2,\C))$ by a ``quasisimplicial
complex,'' and the appropriate choices of logarithms required to obtain
an element in $\widehat B(\C)$ are found by studying combinatorial properties of this complex.
We construct a map similar to Neumann's using $\SL(2,\C)$ instead
of $\PSL(2,\C)$ and the more extended Bloch group from \cite[Section 8]{Neumann} instead of the extended Bloch group.
We thus answer
affirmatively a question raised about the
relation of this group to $H_3(\SL(2,\C))$ \cite[page 443]{Neumann}.
The definition of our map uses only simple
homological algebra, and we obtain a formula which enables
us to calculate the universal \mbox{C--C--S} class directly from a
representative of a homology class in the bar complex. 
All geometry is replaced by algebra which vastly
simplifies the proofs.

We give a brief overview of the contents: 
In \fullref{Overview} we
review the basic theory of the C--C--S classes,
group homology and the Bloch group. Many details are included in
order to make the paper self-contained. 
In \fullref{extb} we recall Neumann's definition of the (more) extended
Bloch group. This overlaps with Neumann's paper but for the sake of
completeness, we include most of the details.
In \fullref{config}, we construct a map $\widehat \lambda\co
H_3(\SL(2,\C))\to \hatcB(\C)$ by describing a way of detecting the appropriate two
choices of logarithms directly from a tuple of group elements. 
The idea is that the extra information
can be found in $\C^2\backslash\{0\}$ rather than $S^2$ using the Hopf map.
In \fullref{relccc} we show that our map actually calculates the
C--C--S class and show that $\widehat \lambda$ is surjective with kernel
of order $2$. Finally, we show in the appendix that our
definition of the extended Bloch group agrees with that of Neumann.

\begin{remark*} The reader should keep in mind that whenever we mention the extended Bloch group, 
we always mean the \emph{more\/} extended Bloch group from \cite[Section 8]{Neumann}.
Neumann uses the notation $\mathcal E \B(\C)$ for this group but we will use the notation $\widehat \B(\C)$ 
even though this conflicts with the notation in \cite{Neumann}.
\end{remark*}

\subsubsection*{Acknowledgements}
This work was partially supported by The Danish Natural Science
Research Council (Statens Naturvidenskabelige
Forsk\-nings\-r\aa d), Denmark.

\section{Preliminaries}\label{Overview}
In this section we review some basic theory and introduce our
terminology. Throughout, $\F$ always denotes either $\R$ or $\C$.
\subsection{The Cheeger--Chern--Simons classes}
We here recall some facts about the C--C--S classes that we shall need.
For their construction and basic properties, we refer to
\cite{CheegerSimons} or \cite{ChernSimons}.

Let $G$ be a Lie group with finitely many components and let $I^k(G,\F)$ denote the group of invariant
polynomials. Recall from classical Chern--Weil
theory that there is a natural homomorphism
 \begin{equation*}W\co I^k(G,\F)\to H^{2k}(BG,\F).
\end{equation*}
Let $r$ denote the map $H^*(BG,\Z)\to H^*(BG,\F)$ induced by the inclusion.
The C--C--S classes are defined from the following data:
\begin{enumerate}
\item An invariant polynomial $P\in I^k(G,\F)$.
\item A class $u\in H^{2k}(BG,\Z)$ satisfying $W(P)=ru$.
\end{enumerate}
Let
$
  K^k(G,\F)=\big\{(P,u)\in I^k(G,\F)\times H^{2k}(BG,\Z)\mid W(P)=ru\big\}.
$

Let $G^\delta$ denote the underlying discrete group of $G$. In
\cite{CheegerSimons} and \cite{ChernSimons},
the authors describe a way of associating a cohomology
class $\hat P(u)$ in $H^{2k-1}(BG^\delta,\F/\Z)$ to an element
$(P,u)$ in $K^k(G,\F)$. This association is natural in the following sense:
\begin{thm}\label{godformel} Let $\phi\co G\to H$ be a Lie group
  homomorphism between Lie groups with finitely many components. The diagram below is commutative.
 \[\xymatrix{{K^k(H,\F)\ar[r]^{\phi^*}}\ar[d]^{\textnormal{C--C--S}}&{K^k(G,\F)}\ar[d]^{\textnormal{C--C--S}}\\
             {H^{2k-1}(BH^\delta,\F/\Z)}\ar[r]^{\phi^*}&{H^{2k-1}(BG^\delta,\F/\Z)}}\]
\end{thm}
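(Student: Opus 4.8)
The plan is to trace the Cheeger--Simons construction of $\hat P$ and to verify that each ingredient entering it is natural with respect to $\phi$. Recall that a pair $(P,u)\in K^k(G,\F)$ produces the class $\hat P(u)\in H^{2k-1}(BG^\delta,\F/\Z)$ roughly as follows: the universal flat $G$--bundle is classified by the canonical map $\gamma_G\co BG^\delta\to BG$, and since a flat connection has vanishing curvature, the Chern--Weil form representing $W(P)$ pulls back to zero on a smooth (or simplicial) model of $BG^\delta$; choosing a primitive of this form together with an integral cocycle representing $u$, one assembles a cocycle with values in $\F/\Z$ whose cohomology class is $\hat P(u)$, and an essential part of the construction is the fact that this class is independent of the choices.

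Now a Lie group homomorphism $\phi\co G\to H$ induces maps $B\phi\co BG\to BH$ and (on the level of bar complexes) $B\phi\co BG^\delta\to BH^\delta$, and these fit into a square together with $\gamma_G$ and $\gamma_H$ that commutes, strictly on the simplicial models. The Chern--Weil homomorphism is natural, so $(B\phi)^*W(P)=W(\phi^*P)$, and so is the coefficient map $r$. Hence, pulling back along $B\phi$ the primitive and the integral cocycle chosen on the side of $H$ yields admissible choices on the side of $G$ for the data $(\phi^*P,\phi^*u)$, and the $\F/\Z$--cocycle that they assemble is precisely the pullback of the one obtained for $(P,u)$. Passing to cohomology gives $(B\phi)^*\hat P(u)=\hat P(\phi^*u)$, which is exactly the asserted commutativity of the diagram.

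The only point requiring care is that $\hat P(\phi^*u)$ must here be computed using the \emph{pulled-back} choices rather than arbitrary ones; this is legitimate precisely because of the independence-of-choices statement in the construction of \cite{CheegerSimons}. So the main difficulty is really a bookkeeping matter rather than a genuine obstacle: one has to invoke well-definedness. Alternatively, one can avoid choices altogether by phrasing everything in terms of Cheeger--Simons differential characters, which pull back functorially under smooth maps; restriction along $\gamma_G$ annihilates the curvature term, so naturality becomes automatic. In either formulation the theorem belongs to the basic formalism of \cite{CheegerSimons} (equivalently \cite{ChernSimons}).
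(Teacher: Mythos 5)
Your argument is correct and is essentially the standard one. Note that the paper itself offers no proof of this statement: it is quoted as part of the basic formalism of \cite{CheegerSimons} and \cite{ChernSimons}, so there is no internal argument to compare against. Your sketch --- commutativity of the classifying-space square for $\phi$, naturality of the Chern--Weil homomorphism ($\phi^*W(P)=W(\phi^*P)$, which uses that a connection on a flat $G$--bundle induces one on the associated $H$--bundle with curvature intertwined by $d\phi$), and the independence-of-choices clause that lets you compute $\hat P(\phi^*u)$ with the pulled-back primitive and integral cocycle --- is exactly how the references establish it, and your remark that the differential-character formulation makes the naturality automatic is the cleanest way to phrase it.
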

\begin{remark}
In the following we shall only be interested in the C--C--S classes corresponding to the second Chern polynomial and
the first Pontrjagin polynomial. In both cases $u$ is just the corresponding Chern class or Pontrjagin class, 
and we simply denote the associated C--C--S classes $\hat C_2$ and $\hat P_1$. 
\end{remark}
\subsection{The homology of a group}\label{grouphomology}
Let $G$ be a group. For a right $G$--module $A$, we let $A_G$ denote
the group $A\otimes_{\Z[G]}\Z$, where $\Z$ is regarded as a trivial $G$--module.
The homology of $G$ is by definition the homology of the
complex $(P_*)_G$, where $P_*$ is a projective resolution of $\Z$ by
right $G$--modules. The following general construction of
a projective resolution is of parti\-cular interest to us:
For $X$ a set, let $C_*(X)$ be the acyclic complex of free
abelian groups, which in dimension $n$ is generated by $(n+1)$--tuples of
elements in $X$. The differential is given by \[\partial(x_0,\dots,x_n)=\sum_{i=0}^n(-1)^i(x_0,\dots,\hat x_i,\dots,x_n).\]
In particular for $X=G$, the diagonal left $G$--action on tuples makes
$C_*(G)$ into a complex of $G$--modules (considered as right modules in the standard way) and $C_*(G)$ augmented by the map
$C_0(G)\to \Z$ given by $(g_0)\mapsto 1$ 
is a free resolution of $\Z$. The complex $C_*(G)_G$ thus
calculates the homology of $G$.

There is another description of this complex.
Consider the complex $B_*(G)$ of free abelian groups, which in dimension
$n$ is generated by symbols $[g_1\vert\cdots\vert g_n]$ and with
differential given by
\begin{align*}\partial [g_1\vert\cdots\vert g_n]= [g_2\vert\cdots\vert g_n]&+
\sum_{i=1}^{n-1}(-1)^i[g_1\vert\cdots\vert g_ig_{i+1}\vert\cdots\vert g_n]\nonumber\\
&+(-1)^n[g_1\vert\cdots\vert g_{n-1}].\end{align*}
This complex is isomorphic to $C_*(G)_G$ via the map 
\begin{equation}\label{inhom}[g_1\vert\cdots\vert g_n]\mapsto
  (1,g_1,g_1g_2,\dots,g_1g_2\cdots g_n)\end{equation} with inverse
\begin{equation}\label{hom}(g_0,\dots,g_n)\mapsto [g_0^{-1}g_1\vert\cdots\vert g_{n-1}^{-1}g_n].
\end{equation}
Hence, we can represent a homology class in $H_n(G)$ 
either by a chain in $C_n(G)$ or by a cycle in $B_n(G)$. 
These two ways of representing homology classes are called the 
\emph{homogenous\/} and the \emph{inhomogenous\/} representation,
respectively.


Let $M$ be a left $G$--module. The cohomology $H^*(G,M)$ is defined as
the homology of the complex $\Hom_{\Z[G]}(P_*,M)$, where $P_*$, this time, is
a projective resolution of $\Z$ by \emph{left\/} $G$--modules.
Regarding a divisible abelian group $A$ as a trivial $G$--module, we
have by the universal coefficient theorem a natural isomorphism
\begin{equation*}\label{univ} H^n(G,A)=\Hom(H_n(G),A).\end{equation*}
It is well known that the homology of a group is isomorphic to the
singular homology of its classifying space, and since the abelian
group $\F/\Z$ is obviously divisible, we can regard the C--C--S classes
as homomorphisms from $H_3(G)$ to $\F/\Z$. It is an interesting problem
to try to find explicit formulas for the C--C--S classes directly in
terms of the resolution $C_*(G)$ (or some subcomplex). We shall
investigate this in the following sections.

We conclude the section with a little lemma that will be useful later.
For each $g\in G$ there is a map $s_g\co C_*(G)\to C_*(G)$ given by
$s_g(g_0,\dots,g_n)=(g,g_0,\dots ,g_n)$. 
\begin{lemma}\label{subcomp}
Let $D_*(G)$ be a $G$--subcomplex of $C_*(G)$. Suppose that for each cycle $\sigma$ in $D_*(G)$,
there exists a point $g(\sigma)$ in $G$ such that $s_{g(\sigma)}\sigma$ is in $D_{n+1}(G)$.
Then $D_*(G)$ is acyclic and $D_*(G)_G$ calculates the homology of $G$.
\end{lemma}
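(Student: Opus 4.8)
The plan is to show that $D_*(G)$ is acyclic by an explicit chain-contraction argument, and then to verify that $D_*(G)_G$ still computes $H_*(G)$ because $D_*(G)$, being a $G$-subcomplex of the free resolution $C_*(G)$ on which $G$ acts freely, is itself a complex of free (hence projective) $\Z[G]$-modules that resolves $\Z$.

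First I would tackle acyclicity. The hypothesis is exactly the statement that the operators $s_g$, which on $C_*(G)$ serve as a contracting homotopy (one has $\partial s_g + s_g \partial = \id$ on $C_*(G)$ since $C_*(G)$ is acyclic, at least in positive degrees), can be applied inside $D_*(G)$ after suitably choosing the cone point. So given a cycle $\sigma \in D_n(G)$ with $n \geq 1$, pick $g = g(\sigma)$ with $s_g\sigma \in D_{n+1}(G)$; then $\partial(s_g\sigma) = \partial s_g \sigma = \sigma - s_g\partial\sigma = \sigma$, using $\partial\sigma = 0$ and the standard identity $\partial(g,x_0,\dots,x_n) = (x_0,\dots,x_n) - (g,\partial(x_0,\dots,x_n))$ for the cone operator. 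Hence $\sigma$ is a boundary in $D_{n+1}(G)$, so $H_n(D_*(G)) = 0$ for $n \geq 1$. For $n = 0$ one checks separately that $H_0(D_*(G)) = \Z$ via the augmentation (assuming, as is implicit, that $D_*(G)$ contains enough $0$-chains, e.g. that the hypothesis covers $0$-cycles too, or that $D_0(G) = C_0(G)$); in any case the relevant point is that $D_*(G) \to \Z$ is a resolution.

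Next, the second assertion. Since $C_*(G)$ is a complex of free $\Z[G]$-modules and the $G$-action (diagonal on tuples) is free on the generating tuples, any $G$-subcomplex $D_*(G)$ is a direct summand as a graded $\Z[G]$-module — more precisely $D_n(G)$ is the free $\Z[G]$-module on whichever $G$-orbits of tuples it contains — so each $D_n(G)$ is free, in particular projective. Combined with acyclicity from the previous paragraph, $D_*(G)$ augmented over $\Z$ is a projective resolution of $\Z$ by right $G$-modules. By definition of group homology (independence of the chosen projective resolution, the standard comparison theorem of homological algebra), $H_*\big(D_*(G)_G\big) \cong H_*(G)$.

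The main obstacle, such as it is, is purely bookkeeping in the first paragraph: getting the signs and the degree-zero case right, and being careful that the contracting homotopy identity $\partial s_g + s_g \partial = \id$ holds in the form needed — this is immediate from the formula for $\partial$ on $C_*(G)$, since $s_g$ inserts $g$ in the $0$-th slot and the $i=0$ term of $\partial(g, x_0, \dots, x_n)$ is exactly $(x_0,\dots,x_n)$. Everything else (freeness, the comparison theorem) is standard homological algebra, so the proof is short. One should also remark that the choice of $g(\sigma)$ need not be natural or uniform — it is made cycle-by-cycle — which is fine because acyclicity is a statement about individual cycles bounding.
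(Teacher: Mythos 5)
Your proof is correct and follows essentially the same route as the paper: the key step is the identity $\partial s_g\sigma=\sigma-s_g\partial\sigma$, which for a cycle $\sigma$ exhibits it as the boundary of $s_{g(\sigma)}\sigma\in D_{n+1}(G)$. The additional remarks on freeness of each $D_n(G)$ as a $\Z[G]$-module and the comparison theorem are standard points the paper leaves implicit, and your handling of them is fine.
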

\begin{proof} 
Note that $\partial s_g(g_0,\dots,g_n)=(g_0,\dots,g_n)-s_g(\partial (g_0,\dots,g_n))$.
Let $\sigma$ be a cycle in $D_*(G)$. Since $\partial\sigma=0$ we have $\sigma = \partial s_{g(\sigma)}\sigma$,
that is, $\sigma$ is a boundary.
\end{proof}

\subsection{The Bloch group}\label{blochgroup}
In all the following, we let $G$ denote the group $\SL(2,\C)$.
\begin{defn}\label{predefn} The \emph{pre-Bloch group\/} $\Pre(\C)$ is an abelian group generated by symbols $[z]$, $z\in \Cremove$ subject to the relation
\begin{equation}\label{fiveterm}
[x]-[y]+\Big[\frac{y}{x}\Big]-\Big[\frac{1-x^{-1}}{1-y^{-1}}\Big]+\Big[\frac{1-x}{1-y}\Big]=0.
\end{equation} 
This relation is called the \emph{five term relation\/}.
\end{defn}
In \cite{Dupont} and \cite{DupontSah} the five term relation is
different, but this is because of the different definition of the
cross-ratio \eqref{cr}.
\begin{defn} The \emph{Bloch group\/} $\B(\C)$ is the kernel of the homomorphism
\[\nu\co \Pre(\C)\to \C^*\wedge\C^*\]
to the second exterior power of the abelian group $\C^*$
defined by mapping a generator $[z]$ to $z\wedge (1-z)$. 
\end{defn}
There is an important interpretation of the pre-Bloch group in terms
of a homology group. Recall the notation from \fullref{grouphomology}. Let $C_*^{\neq}(S^2)$ denote the subcomplex of $C_*(S^2)$ consisting of tuples of distinct elements.
Recall that $G=\SL(2,\C)$ acts on $S^2=\C\cup\{\infty\}$ by M\"obius transformations, that is,
\begin{equation*}\begin{pmatrix}a&b\\c&d\end{pmatrix}z=\frac{az+b}{cz+d}.\end{equation*}
Via this action, the complex $C_*^{\neq}(S^2)$ becomes a complex of $G$--modules.
The action is $3$--transitive and four distinct points $z_0,\dots ,z_3$ are determined up to the action
by the \emph{cross-ratio\/}
\begin{equation}\label{cr}
z=[z_0:z_1:z_2:z_3]:=\frac{(z_0-z_3)(z_1-z_2)}{(z_0-z_2)(z_1-z_3)}.
\end{equation}
Note that in \cite{Dupont} and \cite{DupontSah} the cross-ratio is defined to be the
  reciprocal of \eqref{cr}.
It follows that $C_3^{\neq}(S^2)_G$ is just the free abelian group on $\Cremove$. 
One easily checks that the five term relation is equivalent
to the relation 
\begin{equation*}\sum_{i=0}^4(-1)^i[z_0:\dots:\hat z_i:\dots:z_4]=0.\end{equation*}
This means that the kernel of the cross-ratio map $\sigma\co C_3^{\neq}(S^2)\to \Pre(\C)$ is exactly the boundaries.
Since $C_2^{\smash{\neq}}(S^2)_G=\Z$ by $3$--transitivity, $C_3^{\smash{\neq}}(S^2)_G$ consists entirely of cycles, and $\sigma$ induces an isomorphism
\begin{equation*}\sigma\co  H_3(C_*^{\neq}(S^2)_G)\to \Pre(\C).\end{equation*}
We have the following relations in the pre-Bloch group 
\cite[Lemma 5.11]{DupontSah}:
\begin{equation*}[x]=\Big[\frac{1}{1-x}\Big]=\Big[1-\frac{1}{x}\Big]=-\Big[\frac{1}{x}\Big]=-\Big[\frac{x}{x-1}\Big]=-[1-x]\end{equation*}
If we extend the cross-ratio by setting $[z_0:z_1:z_2:z_3]=0$ if there
are equals among $z_0,\dots,z_3$, it follows from the above relations that
$\sigma$ can be extended to $H_3(C_*(S^2)_G)$.
We omit the details.
We can now define a map \begin{equation*}\lambda\co  H_3(G)\to \Pre(\C)\end{equation*} as the composition
\begin{equation*}\label{lamda}
\xymatrix{{H_3(G)}\ar[r]&{H_3(C_*(S^2)_G)}\ar[r]^-{\sigma}&{\Pre(\C)}}\end{equation*}
where the left map is induced by
\begin{equation*}
C_3(G)\to C_3(S^2), \qquad (g_0,\dots,g_3)\mapsto (g_0\infty,g_1\infty,g_2\infty,g_3\infty).
\end{equation*} 
In \cite{DupontSah} it is shown that $\lambda$ has image in the Bloch group and that the following sequence, 
which is essentially due to Bloch and Wigner, is exact.
\begin{equation}\label{wigner}\xymatrix{{0}\ar[r]&{\Q/\Z}\ar[r] &{H_3(G)}\ar[r]^-\lambda &{\B(\C)}\ar[r]&{0}}\end{equation}
Using the isomorphism $\Q/\Z=\varinjlim {\Z/n\Z}=\varinjlim H_3(\Z/n\Z)$, 
the left map is the limit map induced by the maps $\Z/n\Z\to G$ given by sending $1$ to the matrix of a rotation by $2\pi/n$.
\subsection{Rogers' dilogarithm}
We here review a result in \cite{Dupont} relating the C--C--S class
$\hat P_1$ to a dilogarithm function via the Bloch group.

Rogers' dilogarithm is the following function defined on the open interval $(0,1)$:
\begin{equation}\label{rogdil} 
L(z) =-\frac{1}{2}\Log(z)\Log \Big(\frac{1}{1-z}\Big)+\Li(z)-\frac{\pi^2}{6}\end{equation}
Here $\Li(z)=-\smash{\int_0^z\frac{\Log(1-t)}{t}\,dt}$ is the classical
dilogarithm function. As in \cite{ParrySah} we have subtracted
$\unfrac{\pi^2}{6}$ from the original Rogers' dilogarithm in order to
make it satisfy \eqref{funceq}.
$L$ is real analytic and satisfies the functional equations
\begin{gather} L(x)+L(1-x)=-\frac{\pi^2}{6}\\
\label{funceq}L(x)-L(y)+L\Big(\frac{y}{x}\Big)-
L\Big(\frac{1-x^{-1}}{1-y^{-1}}\Big)+L\Big(\frac{1-x}{1-y}\Big)=0, \qquad y<x.\end{gather}
We can extend $L$ (discontinuously) to $\R$ by setting
\begin{equation*} L(1)=0,\quad L(0)=-\frac{\pi^2}{6}\quad\text{and}\quad L(x)=\begin{cases}-L(1/x) \text{ for } x>1\\
       -L(\unpfrac{x}{x-1}) \text{ for } x<0\end{cases}\end{equation*}
and define a map $L\co  C_3(\SL(2,\R))\to \R$ by  
\begin{equation}\label{dupontchain}(g_0,\dots,g_3)\to L([g_0\infty:\dots:g_3\infty]).\end{equation} 
This is clearly well-defined (recall that cross-ratios are defined to be zero when there are equals)
since all cross-ratios are real.
Also, a few calculations using the functional equations show that the map takes boundaries
to multiples of $\pi^2/6$, that is, it is a $3$--cocycle modulo
$\pi^2/6$. The theorem below can be found in \cite{Dupont}.  The minus sign there is due to
the differing definition of the cross-ratio.

\begin{thm}\label{CCS} $\frac{1}{4\pi^2}L$ equals the Cheeger--Chern--Simons class $\hat P_1$ modulo $1/24$.
\end{thm}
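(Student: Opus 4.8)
The plan is to realize $\widehat P_1$ by an explicit group $3$--cocycle and to show it is cohomologous to $\tfrac1{4\pi^2}L$, both understood modulo their natural indeterminacy ($\tfrac1{24}\Z$ for the former after the computation, $\tfrac{\pi^2}{6}\Z$ for $L$). For $\widehat P_1$ I would use the simplicial--de Rham description of the C--C--S classes (which is also the mechanism by which $\widehat P_1$ is set up): since the universal flat $\SL(2,\R)$--bundle over $B\SL(2,\R)^\delta$ has vanishing Chern--Weil form, the transgression $TP_1$ of the Pontrjagin polynomial is a \emph{closed} form on the total space of the universal flat bundle, and its periods over the $3$--simplices $\Delta^3$ of the simplicial resolution define a group $3$--cochain, well defined modulo $\Z$, representing $\widehat P_1$ in $H^3(B\SL(2,\R)^\delta;\R/\Z)$. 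Using the resolution $C_*(\SL(2,\R))$ of \fullref{grouphomology}, this cochain assigns to $(g_0,\dots,g_3)$ the number $\int_{\Delta^3}TP_1\pmod\Z$, the integral being over the $3$--simplex interpolating the four frames $g_0,\dots,g_3$ with the universal flat connection.

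The second step is to push this integral onto the symmetric space $X=\SL(2,\R)/SO(2)=\mathbb H^2$. By $\SL(2,\R)$--equivariance of $TP_1$ and contractibility of $X$, the quantity $\int_{\Delta^3}TP_1$ rewrites as the integral of a form canonically attached to the flat structure over the geodesic $3$--simplex with vertices $g_0x_0,\dots,g_3x_0\in\mathbb H^2$, for a fixed base point $x_0$. The crucial move -- and the source of the indeterminacy -- is to let the vertices run out to the ideal boundary, replacing $g_ix_0$ by $g_i\infty\in\partial\mathbb H^2=\R\cup\{\infty\}$. Because every ideal triangle of $\mathbb H^2$ has area $\pi$, the face contributions that change during this degeneration sum to a multiple of $\pi^2/6$ (the $\tfrac16$ reflecting the normalization of the Pontrjagin polynomial and $\Li(1)=\pi^2/6$), hence, after dividing by $4\pi^2$, to a multiple of $\tfrac1{24}$; this matches the relation $L(x)+L(1-x)=-\tfrac{\pi^2}{6}$. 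After the limit the cochain depends on $(g_0,\dots,g_3)$ only through the cross-ratio $[g_0\infty:\dots:g_3\infty]$ of \eqref{cr}, exactly as in \eqref{dupontchain}.

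Third, one evaluates the ideal integral. A direct computation -- the classical evaluation of the Chern--Simons transgression of a flat $\SL(2,\R)$--connection on an ideal simplex -- expresses it, as a function of the cross-ratio $z$, in terms of $\Li(z)$ and elementary logarithmic terms, i.e.\ as $4\pi^2\cdot\tfrac1{4\pi^2}L(z)$, Rogers' function \eqref{rogdil} (up to the overall sign fixed by the convention \eqref{cr}). As a built-in consistency check, the functional equation \eqref{funceq} is precisely the five-term / scissors relation, so $\tfrac1{4\pi^2}L$ is indeed a $3$--cocycle modulo $\tfrac1{24}$, matching the closedness modulo periods of $\int_{\Delta^3}TP_1$. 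Since the representing cochains of $\widehat P_1$ and of $\tfrac1{4\pi^2}L$ on $C_*(\SL(2,\R))$ now agree up to the common $\tfrac1{24}\Z$--ambiguity, they define the same class in $H^3(B\SL(2,\R)^\delta;\R/\Z)$, which is the assertion.

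The main obstacle is the combination of the second and third steps: one must actually carry out the Chern--Simons integral over an ideal simplex and identify it with $L$, and, just as importantly, control the degeneration in the second step so that the ambiguity introduced is \emph{exactly} $\tfrac{\pi^2}{6}\Z$ and no larger. A variant trading geometry for bookkeeping would be to use the Bloch--Wigner--type exact sequence $0\to\Q/\Z\to H_3(\SL(2,\R))\to\B(\R)\to 0$ (the real analogue of \eqref{wigner}): check that $\widehat P_1$ and $\tfrac1{4\pi^2}L$ agree on the torsion subgroup by a direct calculation with the cyclic subgroups generated by rotations -- which is where the Bernoulli-type constant $\tfrac1{24}$ enters -- and that they induce the same homomorphism on the free quotient $\B(\R)$ by a rigidity argument; but pinning down the $\Q/\Z$--indeterminacy is again the delicate point.
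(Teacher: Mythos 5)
First, a point of reference: the paper does not prove \fullref{CCS} at all --- it is quoted from \cite{Dupont} (``The theorem below can be found in [D]''), so there is no in-paper argument to compare yours against; the comparison below is with the argument that reference actually supplies. Your first step (representing $\hat P_1$ of a flat bundle by integrating the transgressed Pontrjagin form over the simplices of the simplicial resolution, well defined mod $\Z$) is the right starting point and is how \cite{Dupont} sets things up. The gap opens at your second step: the symmetric space $\SL(2,\R)/SO(2)=\mathbb H^2$ is $2$--dimensional, so ``the integral of a form canonically attached to the flat structure over the geodesic $3$--simplex with vertices $g_ix_0\in\mathbb H^2$'' is dimensionally incoherent --- any $3$--form restricted to a geodesic $3$--simplex in $\mathbb H^2$ integrates to zero. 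The geodesic--simplex--degenerating--to--ideal--vertices mechanism you describe is the one that computes the \emph{imaginary} part of $\hat C_2$ on $\SL(2,\C)$, where the symmetric space is the $3$--dimensional $\mathbb H^3$ and the integrand is the volume form (that is \fullref{dupontim} here). For $\hat P_1$ on $\SL(2,\R)$ the computation in \cite{Dupont} instead runs through the boundary $\R P^1\subset\C P^1$, i.e.\ the associated flat flag bundle with its canonical sections $g_i\infty$, and the dilogarithm arises from an explicit iterated integral of logarithmic $1$--forms; it is not a volume. Relatedly, ``the face contributions sum to a multiple of $\pi^2/6$ because every ideal triangle has area $\pi$'' is not an argument: the area of an ideal triangle is $\pi$, and the $\pi^2/6$ actually enters through $\Li(1)=\pi^2/6$ and the discontinuous extension of $L$ to $\R\setminus[0,1]$, which is exactly what makes $L$ a cocycle only modulo $\pi^2/6$ (hence the $1/24$ after dividing by $4\pi^2$).

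More fundamentally, your steps 2 and 3 --- reducing the integral to a function of the cross-ratio alone, evaluating it as Rogers' $L$, and verifying that the ambiguity introduced is exactly $\frac{\pi^2}{6}\Z$ --- are asserted rather than carried out, and you say so yourself (``the main obstacle''). But those three items \emph{are} the theorem; the rest of the write-up is standard framing of the Cheeger--Chern--Simons construction. The alternative route you sketch at the end (split along a Bloch--Wigner-type sequence for $\R$, check on torsion via rotations and on the quotient by rigidity) likewise leaves the two decisive computations open: the value of $\hat P_1$ on the finite cyclic subgroups, and the identification of the induced map on the quotient with $L$. So the proposal is a reasonable plan with one incoherent intermediate step, not a proof.
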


Since the restriction of the second Chern polynomial to the Lie algebra of $\SL(2,\R)$ is minus the 
Pontrjagin polynomial, it follows from \fullref{godformel} that we have a commutative diagram:
\begin{equation}\label{PC}\cxymatrix{{{H_3(\SL(2,\R))}\ar[r]^-{-\hat P_1}\ar[d]&{\R/\Z}\ar[d]\\
           {H_3(\SL(2,\C))}\ar[r]^-{\hat C_2}&{\C/\Z}}}\end{equation}
By \fullref{CCS},  $\hat P_1$ is (modulo $1/24$) just a dilogarithm via the Bloch group.
We wish to find a similar expression for $\hat C_2$ by extending $L$ to $H_3(\SL(2,\C))$.
This is partially solved in \cite{Dupont} by studying a homomorphism $c\co  \B(\C)\to \C/\Q$, known as the Bloch regulator, 
and showing that the composition below is $2\hat C_2$.
\begin{equation*}\xymatrix{{H_3(\SL(2,\C))}\ar[r]^-\lambda& {\B(\C)}\ar[r]^c& {\C/\Q}}\end{equation*}
We shall improve this by showing that there is a commutative diagram
\begin{equation*}\cxymatrix{@C=50pt{{H_3(\SL(2,\C))}\ar[r]^-{\widehat\lambda}\ar[d]&{\widehat \B(\C)}\ar[r]^{-\frac{1}{2\pi^2}\widehat L}\ar[d]&{\C/\Z}\ar[d]\\
           {H_3(\SL(2,\C))/(\Q/\Z)}\ar[r]^-\lambda&{\B(\C)}\ar[r]^c&{\C/\Q}}}\end{equation*}
so that the top composition is $2\hat C_2$.
Here $\,\!\hatcB(\C)$ is Neumann's extended Bloch group (see \cite{Neumann} or \fullref{extb}).
In other words, $\hat C_2$ is a dilogarithm via the extended Bloch group exactly as
$\hat P_1$ is a dilogarithm via the Bloch group.

\section{The extended Bloch group}\label{extb}
In this section we review Neumann's definition of the extended Bloch
group. As mentioned in the introduction the reader should keep in mind that our extended Bloch group is what Neumann calls the more extended Bloch group. 

We shall use the conventions that the argument $\Arg z$ 
of a complex number always denotes the main argument  ($-\pi<\Arg z\leq \pi$) and the logarithm 
$\Log z$ always denotes the logarithm having $\Arg z$ as imaginary part.

The idea is to construct a Riemann surface $\,\!\widehat\C$ covering $\Cremove$ and then construct the extended pre-Bloch group 
$\widehat\Pre(\C)$ as in \fullref{predefn}, with an appropriate
lift of the five term relation. 

Let $\widehat \C$ denote the universal abelian cover of $\Cremove$. 
There is a nice way of representing points in $\widehat \C$.
Let $\C_{\cut}$ denote $\Cremove$ cut open along each of the intervals $(-\infty,0)$ and $(1,\infty)$ so 
that each real number $r$ outside of $[0,1]$ occurs twice in $\C_{\cut}$. We shall denote 
these two occurrences of $r$ by $r+0i$ and $r-0i$ respectively.
It is now easy to see that $\widehat\C$ is isomorphic to the surface obtained from $\C_{\cut}\times 2\Z\times 2\Z$ 
by the following identifications:
\begin{align*}(x+0i,2p,2q)&\sim(x-0i,2p+2,2q) \text{ for } x\in(-\infty,0)\\
(x+0i,2p,2q)&\sim(x-0i,2p,2q+2) \text{ for } x\in(1,\infty).\end{align*}
This means that points in $\widehat \C$ are of the form $(z,p,q)$ with $z\in \Cremove$ and $p,q$ even integers. Note that $\widehat\C$ can be regarded as the Riemann surface for the function
\[\Cremove\to \C^2,\qquad z\mapsto\Big(\Log z,\Log \Big(\frac {1}{1-z}\Big)\Big).\]
We shall show below that $L$ can be extended holomorphically to be defined on $\,\!\widehat \C$, and then
we shall simply define the extended five term relation to be the smallest possible extension of the relation \eqref{funceq}.

Consider the set
\[\FT:=\Big\{\Big(x,y,\frac{y}{x},\frac{1-x^{-1}}{1-y^{-1}},\frac{1-x}{1-y}\Big)\Big\}\subset(\Cremove)^5\]
of five-tuples involved in the five term relation.  Also let
\[\FT_0=\big\{(x_0,\dots,x_4)\in \FT\mid 0<x_1<x_0<1\big\}\]
be the set of five-tuples involved in the functional equation
\eqref{funceq}.  Define the set
$\widehat{\FT}\subset\widehat\C\times\dots\times\widehat\C$ to be the
component of the preimage of $\FT$ that contains all points
$\big((x_0;0,0),\dots,(x_4;0,0)\big)$ with $(x_0,\dots,x_4)\in\FT_0$.
\begin{remark}
This notation conflicts with the notation in \cite{Neumann}. Our $\widehat{\FT}$ is what Neumann calls $\widehat{\FT}_{00}$ in \cite[Section 8]{Neumann}. This is shown in the appendix. 
\end{remark}
\begin{defn}\label{ebgdefn}
The \emph{extended pre-Bloch group\/} $\widehat\Pre(\C)$ is the 
abelian group generated by symbols $[z;p,q]$, with $(z;p,q)\in \,\!\widehat\C$, subject to the relation
\[\sum_{i=0}^4(-1)^i[x_i;p_i,q_i]=0\textnormal{  for  }((x_0;p_0,q_0),\dots,(x_4;p_4,q_4))\in\,\!\widehat{\FT}.\]
This relation is called the \emph{extended five term relation\/}.
\end{defn}

\begin{defn}
The \emph{extended Bloch group\/} $\widehat \B(\C)$ is the kernel of the
homomorphism (which is well-defined by \cite[Lemma 2.3]{Neumann})
\[\widehat\nu\co  \widehat\Pre(\C)\to \C\wedge \C\] defined on generators by
$[z;p,q]\mapsto (\Log z+p\pi i)\wedge (-\Log(1-z)+q\pi i)$.
\end{defn}

We now extend $L$ to $\widehat \C$.
First note that the expression in \eqref{rogdil} is well-defined for all $z\in\Cremove$,
and that $L$ extended this way is holomorphic except at real points outside the interval \hbox{between $0$ and $1$.}
\[\widehat L(z;p,q)=L(z)+\frac{\pi i}{2}\Big(q\Log(z)-p\Log\Big(\frac{1}{1-z}\Big)\Big).
\leqno{\hbox{Define}}
\]

\begin{remark}
Neumann calls this map $R$ (probably for Rogers), but in fact Rogers originally called his dilogarithm $L$.
Also, the name $\widehat L$ is more consistent with our convention that all extended groups and maps be labelled with a hat.
\end{remark}
\begin{prop}{(\rm Neumann \cite[Proposition 2.5]{Neumann})}\qua
$\smash{\frac{1}{2\pi^2}}\widehat L$ gives a well-defined and holomorphic map $\widehat\C\to\C/\Z$.
Also the extended five term relation is a functional equation so that
$\frac{1}{2\pi^2}\widehat L$ gives a homomorphism $\widehat\Pre(\C)\to \C/\Z$.
\end{prop}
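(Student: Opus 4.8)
The plan is to prove the two assertions separately: first that $\tfrac{1}{2\pi^2}\widehat L$ descends to a well-defined holomorphic map $\widehat\C\to\C/\Z$, and then that it annihilates every instance of the extended five term relation, so that it factors through $\widehat\Pre(\C)$. For the first assertion I would work on the model $\C_{\cut}\times 2\Z\times 2\Z$ for $\widehat\C$ recalled above. On each sheet $\C_{\cut}\times\{(p,q)\}$ the functions $L(z)$, $\Log z$ and $-\Log(1-z)$ are holomorphic --- the singularities of $L$ all lie on the two excised rays, which have been cut away --- so $\widehat L$ is holomorphic there; moreover each such sheet is simply connected, since cutting $\Cremove$ along $(-\infty,0)$ and $(1,\infty)$ kills the two generators of its fundamental group. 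Hence it suffices to check that the two sheet-formulas for $\widehat L$ agree modulo $2\pi^2$ along each of the two gluing intervals; granted that, $\tfrac{1}{2\pi^2}\widehat L$ is a continuous map $\widehat\C\to\C/\Z$ which is holomorphic off a pair of real-analytic arcs, hence holomorphic everywhere. The gluing check is a direct computation with the jumps of the constituent functions: across $(-\infty,0)$ only $\Log z$ is discontinuous, with jump $2\pi i$; across $(1,\infty)$ only $-\Log(1-z)$ (jump $2\pi i$) and $\Li(z)$ (jump $2\pi i\Log z$, read off from $\Li(z)=-\int_0^z\Log(1-t)\,\tfrac{dt}{t}$) are discontinuous. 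Substituting these into $\widehat L(z;p,q)=L(z)+\tfrac{\pi i}{2}\big(q\Log z-p\Log\tfrac{1}{1-z}\big)$, together with the shift $p\mapsto p+2$ (respectively $q\mapsto q+2$) dictated by the identifications, the logarithmic terms cancel and the remainder equals $\pm\pi^2$ times an even integer, hence lies in $2\pi^2\Z$ as required. A convenient bookkeeping device, which also makes holomorphy on $\widehat\C$ transparent, is the identity $d\widehat L=\tfrac12(w_1\,dw_0-w_0\,dw_1)$ with $w_0=\Log z+p\pi i$, $w_1=-\Log(1-z)+q\pi i$: here $w_0,w_1$ are genuinely single-valued on $\widehat\C$, so $\omega:=\tfrac12(w_1\,dw_0-w_0\,dw_1)$ is a global holomorphic $1$-form, and well-definedness modulo $2\pi^2$ amounts to the periods of $\omega$ lying in $2\pi^2\Z$, which one computes directly.

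For the second assertion, set $S:=\sum_{i=0}^4(-1)^i\widehat L(x_i;p_i,q_i)$, a holomorphic map $\widehat{\FT}\to\C/2\pi^2\Z$ by the first part (this uses nothing about $\pi_1(\widehat{\FT})$). I would prove $S\equiv 0$ by analytic continuation from the distinguished locus. At a point $\big((x_0;0,0),\dots,(x_4;0,0)\big)$ with $(x_0,\dots,x_4)\in\FT_0$ one has $\widehat L(x_i;0,0)=L(x_i)$, and writing $x=x_0$, $y=x_1$ with $0<y<x<1$ one checks that $x_1/x_0$, $\tfrac{1-x_0}{1-x_1}$ and $\tfrac{1-x_0^{-1}}{1-x_1^{-1}}$ all lie in $(0,1)$; hence $S=L(x_0)-L(x_1)+L(x_2)-L(x_3)+L(x_4)=0$ by the functional equation \eqref{funceq}, which applies since $y<x$. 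Thus $S$ vanishes on the lift $N$ of $\FT_0$, which in the holomorphic coordinates on $\widehat{\FT}$ coming from the $(x,y)$--parametrisation of $\FT$ is the open region $\{0<y<x<1\}$ of $\R^2\subset\C^2$. A local $\C$--valued lift of $S$ near a point of $N$ takes values in $2\pi^2\Z$ on $N$, hence is constant there; subtracting that constant gives a holomorphic function vanishing on an open subset of $\R^2$, hence (Taylor expansion) on a full neighbourhood, so $S$ vanishes on an open subset of $\widehat{\FT}$. Since $\widehat{\FT}$ is connected --- it is so by its very definition as ``the component containing the distinguished points'' --- and $dS$ is a global holomorphic $1$-form, $dS\equiv 0$; thus $S$ is locally constant, hence identically $0$. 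Therefore $\sum_{i=0}^4(-1)^i\tfrac{1}{2\pi^2}\widehat L(x_i;p_i,q_i)\in\Z$ on all of $\widehat{\FT}$: the extended five term relation is a functional equation, and $[z;p,q]\mapsto\tfrac{1}{2\pi^2}\widehat L(z;p,q)$ respects the defining relations of $\widehat\Pre(\C)$, giving the asserted homomorphism.

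I expect the crux to be this continuation step: the distinguished locus $\FT_0$ is only a real $2$-dimensional slice of the real $4$-dimensional $\widehat{\FT}$, so vanishing there propagates to all of $\widehat{\FT}$ only because that slice is totally real of maximal dimension and $\widehat{\FT}$ is connected --- precisely the two structural facts encoded in the definition of $\widehat{\FT}$. The remaining care is in the branch-cut bookkeeping of the first assertion, above all in pinning down the monodromy of $\Li$ across $(1,\infty)$.
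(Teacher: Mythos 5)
Your argument is correct. Note first that the paper offers no proof of this proposition at all --- it is quoted from Neumann \cite[Proposition 2.5]{Neumann} --- so there is no in-paper argument to match yours against; what you have written is a reconstruction of the cited proof. Both halves check out. The branch-cut bookkeeping is right: across $(-\infty,0)$ only $\Log z$ jumps (by $2\pi i$), across $(1,\infty)$ only $\Log(1-z)$ and $\Li$ jump (by $-2\pi i$ and $2\pi i\Log z$ respectively, measured from above minus below), and feeding these together with the shifts $p\mapsto p+2$, resp.\ $q\mapsto q+2$ into the formula for $\widehat L$ leaves discrepancies $-\pi^2 q$, resp.\ $\pi^2 p$, which lie in $2\pi^2\Z$ precisely because $p,q$ are forced to be even in the model of $\widehat\C$. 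Your identity $d\widehat L=\tfrac12(w_1\,dw_0-w_0\,dw_1)$ is also correct and is exactly the form in which Neumann packages this computation.

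On the functional equation your route differs mildly from Neumann's published one. He verifies that $\sum_{i=0}^4(-1)^i\,d\widehat L(x_i;p_i,q_i)$ vanishes identically on $\widehat{\FT}$ by a direct computation with the linear relations among the log-parameters (the ten equations displayed after the definition of the flattening condition), and then needs the classical relation only at a single point of $\FT_0$. You instead propagate the vanishing from the lift of $\FT_0$ --- an open piece of the maximal totally real slice $\{0<y<x<1\}\subset\R^2\subset\C^2$ in the $(x,y)$--coordinates --- to an open subset of $\widehat{\FT}$ by the identity theorem, and then use connectedness of $\widehat{\FT}$. This is the one step where a sloppy version would fail, since $\FT_0$ is not open in $\widehat{\FT}$ and ``vanishes on $\FT_0$ plus connectedness'' is not by itself sufficient; you have handled it correctly, including the check that all five arguments $x_i$ lie in $(0,1)$ when $0<x_1<x_0<1$, so that \eqref{funceq} genuinely applies with $p_i=q_i=0$. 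The trade-off is that your version consumes the full functional equation \eqref{funceq} on all of $\FT_0$, while Neumann's consumes it at one point at the price of the algebraic verification that $\sum(-1)^i(w_1^i\,dw_0^i-w_0^i\,dw_1^i)=0$.
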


\subsection{Geometry of the extended pre-Bloch group}\label{geometry}
We first recall some geometric properties of the cross-ratio.
Let $z_0,z_1,z_2,z_3$ be four distinct ordered points in $\C\cup\{\infty\}$. Regarding $\C\cup\{\infty\}$ as the boundary
of the standard compactification of hyperbolic $3$--space $\mathbb H^3$, 
the four points define a unique ideal hyperbolic simplex $[z_0,\dots,z_3]$ which is 
determined up to orientation preserving congruence by the cross-ratio
\begin{equation}\label{cross-ratio}
z=[z_0:z_1:z_2:z_3]:=\frac{(z_0-z_3)(z_1-z_2)}{(z_0-z_2)(z_1-z_3)}.
\end{equation}
Clearly $z\in\Cremove$ and since $[0:\infty:1:z]=z$, every $z\in \Cremove$ can be realized 
as the cross-ratio of an ideal hyperbolic simplex.
It is well known that $z$ is real if and only if the four points lie on a circle (that is circle or straight line) and in this case the 
simplex is called \emph{flat\/}. 

We orient $\mathbb H^3$ such that the cross-ratio of a nonflat simplex has positive imaginary part if and only if the orientation induced by the vertex ordering agrees with the orientation inherited from $\mathbb H^3$. There is a nice geometric interpretation of the argument of $z$.
If the imaginary part of $z$ is greater than or equal to zero then $\Arg z$ is the dihedral angle of the simplex corresponding
to the edge $[z_0z_1]$. Otherwise, that is if the orientation disagrees with the orientation of $\mathbb H^3$, 
it is minus the dihedral angle.

 It easily follows from \eqref{cross-ratio} that an even permutation of the $z_i$'s replaces $z$ by one of three so-called
\emph{cross-ratio parameters\/}:
 \[z,\quad z'=\frac{1}{1-z}\quad\text{and}\quad z''=1-\frac{1}{z}\]
In particular the dihedral angle corresponding to the edges $[z_1z_2]$ and $[z_1z_3]$ are $\Arg(z')$ and 
$\Arg(z'')$ respectively, (or their negatives if the vertex ordering does not agree with the orientation of $\mathbb H^3$).
Since a product of two disjoint transpositions clearly keeps the cross-ratio fixed, we see that the dihedral angles of 
opposite edges are the same.
Note that since $zz'z''=-1$ the sum of the dihedral angles is always $\pi$. 
This is not surprising since a horosphere at an ideal vertex
of a hyperbolic simplex intersects the simplex in a Euclidean triangle.

\begin{defn}A \emph{combinatorial flattening\/} of an ideal simplex with cross-ratio $z$ 
is a triple $(w_0,w_1,w_2)$ of complex numbers with $w_0+w_1+w_2=0$,   
where $w_0$ and $w_1$ are choices of logarithms of $z$ and $z'$.  
We call $w_0,w_1$ and $w_2$ \emph{log-parameters\/}.
\end{defn} 

Note that $w_2\pm\pi i$ is a choice of logarithm of $z''$. The set of combinatorial flattenings of ideal simplices 
is in bijective correspondence with $\widehat \C$ by the map $l$ given by
\begin{equation}\label{l}l(w_0,w_1,w_2) = \Big(z;\frac{w_0-\Log z}{\pi i},\frac{w_1-\Log(\frac{1}{1-z})}{\pi i}\Big)\end{equation}
where $z=e^{w_0}$. This means that the extended pre-Bloch group can be regarded as being generated
by combinatorial flattenings of ideal simplices, whereas the pre-Bloch group can be regarded as being generated 
by congruence classes of ideal simplices. Let us discuss the five term relation in this geometric setup.

Suppose $(w_0,w_1,w_2)$ is a combinatorial flattening of an ideal simplex $[z_0,\dots,z_3]$.
Then we can assign log-parameters to each edge in such a way that $w_0$ is assigned to the edge $[z_0z_1]$,
$w_1$ to the edge $[z_1z_2]$ and $w_2$ to the edge $[z_1z_3]$. The three remaining edges are assigned the same
log-parameter as their opposite edge. See \mbox{\fullref{figure1}}.
Let $z_0,\ldots,z_4$ be five distinct points in $\C\cup\{\infty\}$ and let $\Delta_i$ denote
the simplices $[z_0,\dots,\hat z_i,\dots,z_4]$.
Using \eqref{cross-ratio}, it is easy to see that the cross-ratios $x_i$ of $\Delta_i$ can be expressed in terms of 
$x:=z_0$ and $y:=z_1$ as follows:
\begin{align*}x_0&=[z_1:z_2:z_3:z_4]:=x\\
x_1&=[z_0:z_2:z_3:z_4]:=y\\
x_2&=[z_0:z_1:z_3:z_4]=\frac{y}{x}\\
x_3&=[z_0:z_1:z_2:z_4]=\frac{1-x^{-1}}{1-y^{-1}}\\
x_4&=[z_0:z_1:z_2:z_3]=\frac{1-x}{1-y}\end{align*}
\begin{figure}[ht!]\label{figure1}
        \begin{center}
                \includegraphics[scale=0.9]{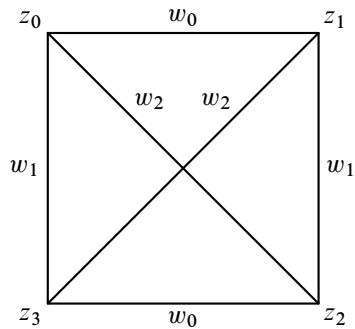}
        \caption{Assignment of log-parameters to edges of an ideal simplex}
        \end{center}
\end{figure}

Suppose $(w_0^i,w_1^i,w_2^i)$ are combinatorial flattenings of the simplices $\Delta_i$.
 Then every edge $[z_iz_j]$ belongs to exactly three of the $\Delta_i$'s 
and is therefore assigned three log-parameters.
\begin{defn}\label{combflat}  Let $(w_0^i,w_1^i,w_2^i)$ be combinatorial flattenings of the five simplices 
$\Delta_i=[z_0,\dots,\hat z_i,\dots,z_4]$.
The flattenings are said to satisfy the \emph{flattening condition\/} if for each edge
the signed sum of the three assigned log-parameters is zero (the sign is positive if and only if $i$ is even). 
\end{defn}
It follows directly from the definition that the flattening condition is equivalent to the following ten equations.
\begin{align*}
&[z_0z_1]:&w_0^2-w_0^3+w_0^4=0&& &[z_0z_2]:&-w_0^1-w_2^3+w_2^4=0\\
&[z_1z_2]:&w_0^0-w_1^3+w_1^4=0&& &[z_1z_3]:&w_2^0+w_1^2+w_2^4=0\\
&[z_2z_3]:&w_1^0-w_1^1+w_0^4=0&& &[z_2z_4]:&w_2^0-w_2^1-w_0^3=0\\
&[z_3z_4]:&w_0^0-w_0^1+w_0^2=0&& &[z_3z_0]:&-w_2^1+w_2^2+w_1^4=0\\
&[z_4z_0]:&-w_1^1+w_1^2-w_1^3=0&& &[z_4z_1]:&w_1^0-w_2^2-w_2^3=0
\end{align*}
Recall that combinatorial flattenings are in one to one correspondence with points in $\widehat \C$ via
the map $l$ in \eqref{l}.
\begin{thm}{(\rm Neumann \cite[Lemma 3.4]{Neumann})}\qua\label{neu} Flattenings $(w_0^i,w_1^i,w_2^i)$ satisfy the flattening condition if and only if
$\sum_{i=0}^4(-1)^i[l(w_0^i,w_1^i,w_2^i)]=0$ in $\,\!\widehat \Pre(\C)$. 
\end{thm}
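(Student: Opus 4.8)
The plan is to unwind both sides of the claimed equivalence down to a single comparison of coordinates in $\widehat\C$, exploiting the explicit model of $\widehat\C$ as $\C_{\cut}\times 2\Z\times 2\Z$ with the prescribed identifications. First I would fix once and for all the combinatorial structure: an ideal simplex $\Delta_i$ determined by its cross-ratio $x_i$ comes with the three log-parameters $w_0^i,w_1^i,w_2^i$ assigned to the edges $[z_0z_1]$-type, $[z_1z_2]$-type, $[z_1z_3]$-type of $\Delta_i$ (in the induced vertex ordering), and via \eqref{l} the point $l(w_0^i,w_1^i,w_2^i)=(x_i;p_i,q_i)$ records exactly how far $w_0^i$ and $w_1^i$ deviate from the principal logarithms $\Log x_i$ and $\Log\bigl(\tfrac{1}{1-x_i}\bigr)$. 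So a choice of flattenings for the five simplices is precisely a choice of a $5$-tuple $\bigl((x_0;p_0,q_0),\dots,(x_4;p_4,q_4)\bigr)$ of points of $\widehat\C$ sitting above the fixed point $(x_0,\dots,x_4)\in\FT$.

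Next I would argue the ``easy direction'' — the reference tuple. Take the flattenings $(w_0^i,w_1^i,w_2^i)=(\Log x_i,\Log\tfrac{1}{1-x_i},w_2^i)$ with all deviations zero, in the range $0<x_1<x_0<1$, i.e.\ the tuple $\bigl((x_0;0,0),\dots,(x_4;0,0)\bigr)$ with $(x_0,\dots,x_4)\in\FT_0$. One checks by a direct computation with principal logarithms (using $x_2x_4=x_0$-type multiplicative identities among the five cross-ratios, which give $\Log x_0=\Log x_2+\Log x_4$ etc.\ on the nose because all arguments are real in $(0,1)$) that the ten flattening equations of \fullref{combflat} hold for this tuple, and simultaneously that this tuple lies in $\widehat{\FT}$ by definition. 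Hence for this particular tuple both the flattening condition and the relation $\sum(-1)^i[l(w_0^i,w_1^i,w_2^i)]=0$ in $\widehat\Pre(\C)$ hold; the latter is just the extended five term relation of \fullref{ebgdefn} at a member of $\widehat{\FT}$. This pins down the statement at the base point.

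Then comes the core of the argument: both conditions are invariant under, and actually governed by, the monodromy of the covering $\widehat\C\to\Cremove$. The ten flattening equations are a system of $\Z$-linear equations in the fifteen integer pairs $(p_i,q_i)$ (equivalently in the log-parameters); I would write down the coefficient matrix and show that its solution set, as one varies the base cross-ratios $(x_0,\dots,x_4)$ over $\FT$, forms a connected covering space of $\FT$. Concretely, lifting a loop in $\FT$ around one of the branch loci moves each $(x_i;p_i,q_i)$ by the deck transformations $(z;p,q)\mapsto(z;p+2,q)$ or $(z;p,q)\mapsto(z;p,q+2)$ in a pattern dictated by \eqref{cross-ratio}, and the ten flattening equations are precisely the relations that survive this monodromy action — they are preserved by every such simultaneous lift. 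Therefore the locus of $5$-tuples in $\widehat\C^{\,5}$ satisfying the flattening condition is exactly the connected component of the preimage of $\FT$ containing the base point, which is by definition $\widehat{\FT}$. Equivalently: the flattening condition cuts out $\widehat{\FT}$. Once that identification is in hand, the theorem is immediate — the flattening condition holds iff $\bigl((x_0;p_0,q_0),\dots,(x_4;p_4,q_4)\bigr)\in\widehat{\FT}$ iff $\sum_{i=0}^4(-1)^i[l(w_0^i,w_1^i,w_2^i)]=0$ in $\widehat\Pre(\C)$, the last step being the definition of the extended five term relation.

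The main obstacle is the connectivity/monodromy bookkeeping in the previous paragraph: one must verify that the $\Z$-linear flattening system has exactly the right rank so that its solution set over $\FT$ is a single connected component (not a disjoint union of several, nor something larger than $\widehat{\FT}$), and that the monodromy of $\widehat\C^{\,5}\to\Cremove^{\,5}$ restricted to the preimage of $\FT$ acts transitively on the fiber of the flattening-solution space over the base point. I expect this to reduce to computing the rank of the explicit $10\times15$ coefficient matrix (it should be $5$, matching the five $\Z^2$-worth of deck transformations, once one accounts for the $\C$-wedge constraints), together with showing generators of $\pi_1(\FT)$ realize all the needed deck moves — a finite but delicate check. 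Since the statement is quoted from \cite[Lemma 3.4]{Neumann}, I would present this as the streamlined algebraic reorganization of Neumann's argument and refer to his paper for any monodromy computation I choose not to redo in full.
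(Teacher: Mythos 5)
The paper gives no proof of this statement at all---it is quoted from Neumann \cite[Lemma 3.4]{Neumann}---so there is no internal proof to compare against; your reconstruction has to be measured against Neumann's argument, and in outline it matches it. Your strategy (verify both conditions at the base point over $\FT_0$; note that each of the ten flattening equations corresponds to a multiplicative identity among the cross-ratio parameters, so its left-hand side is a continuous $2\pi i\Z$--valued, hence locally constant, function on the preimage of $\FT$ in $\widehat\C^{\,5}$, making the flattening locus a union of components that contains $\widehat{\FT}$; then show it is exactly one component by checking that the solution lattice of the ten equations in the $(p_i,q_i)$ has rank five and that the monodromy of $\pi_1(\FT)$ acts transitively on it) is the right one. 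Indeed, the monodromy half of this computation is carried out in this paper's own appendix (\fullref{applemma} and Figure 2), where loops of $x_0$ around $0$, $1$, $x_1$ and of $x_1$ around $0$, $1$ are shown to sweep out precisely the five-parameter family \eqref{pq}; what remains for you is the linear-algebra half, namely that \eqref{pq} is exactly the general solution of the ten equations. You correctly identify this rank/transitivity check as the crux, but it is also essentially the entire content of the lemma, and you only sketch it.

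There is one genuine logical gap in your closing chain ``flattening condition iff membership in $\widehat{\FT}$ iff $\sum(-1)^i[l(w^i)]=0$ in $\widehat\Pre(\C)$, the last step being the definition.'' \fullref{ebgdefn} gives only one direction of that last step: membership in $\widehat{\FT}$ forces the element to vanish in the quotient. It does \emph{not} give the converse, since an element of the free abelian group can die in $\widehat\Pre(\C)$ as a consequence of other instances of the relation without itself being one. To prove the literal ``if'' direction of the theorem you would need an invariant of $\widehat\Pre(\C)$ (for instance $\widehat\nu$, $\widehat L$, or Neumann's description of the fibers of $\widehat\Pre(\C)\to\Pre(\C)$) that detects failure of the flattening condition. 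In this paper the theorem is only ever invoked in the direction ``flattening condition $\Rightarrow$ the alternating sum vanishes'' (to show $\widehat\sigma$ kills boundaries), and the sentence following the theorem reads it as equivalence with being an \emph{instance} of the extended five term relation, so your reading is defensible---but as a proof of the stated biconditional your argument is incomplete at exactly this point.
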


This means that the flattening condition is equivalent to the extended five term relation.

\section[Mappings via configurations]{Mappings via configurations in ${\C^2}\backslash\{0\}$}\label{config}
In this section we explore the idea that the extra information needed to remove the $\Q/\Z$ indeterminacy 
in Dupont's formula for the C--C--S class $\hat C_2$ can be detected by
configurations in $\C^2\backslash\{0\}$  instead of $S^2$. Let $h$ denote the Hopf map $h\co  \C^2\backslash\{0\}\to S^2=\C\cup\{\infty\}$ given by 
\begin{equation*} (z,w)\mapsto z/w.\end{equation*}
We will show that for certain tuples $(v_0,\dots,v_3)$ of points in $\C^2\backslash\{0\}$, 
there is a natural choice of combinatorial flattening of the ideal simplex $[hv_0,\dots,hv_3]$.
This means that such a tuple gives an element in $\widehat \Pre(\C)$.
We also describe a way of associating such a tuple to a tuple of group elements in such a way that
we obtain a map \[\widehat\lambda\co  H_3(G)\to \widehat\Pre(\C).\]
Recall from \fullref{blochgroup} that there is a map $\sigma\co  C_3^{\neq}(S^2)_G\to \Pre(\C)$.
We saw that boundaries were mapped to zero and that the induced map 
\[\sigma \co H_3(C_*^{\neq}(S^2)_G)\to \Pre(\C)\]
is an isomorphism. 
We shall elaborate on this and construct a $G$--complex $C_*^{h\neq}(\C^2)$ and a map $\,\!\widehat \sigma\co  C_3^{h\neq}(\C^2)_G\to \widehat\Pre(\C)$ giving rise to a commutative diagram:
\begin{equation*}
\xymatrix{{H_3(C_*^{h\neq}(\C^2)_G)}\ar[r]^-{\widehat\sigma}\ar[d]^h&{\widehat\Pre(\C)}\ar[d]\\
            {H_3(C_*^{\neq}(S^2)_G)}\ar[r]^-\sigma&{\Pre(\C)}}
\end{equation*} 
We define the complex $C_*^{h\neq}(\C^2)$ as the subcomplex of $C_*(\C^2\backslash\{0\})$ consisting of tuples 
mapping to different elements in $S^2$ by the Hopf map $h$. The $G$--module structure is given by the natural $G$--action on $\C^2\backslash\{0\}$, and since this action is $h$--equivariant, $h$ induces a $G$--map $C_*^{h\neq}(\C^2)\to C_*^{\neq}(S^2)$ and hence a map 
\[h\co  H_3(C_*^{h\neq}(\C^2)_G)\to H_3(C_*^{\neq}(S^2)_G).\] 

\subsection{Mapping to the extended pre-Bloch group}
We now assign to each $4$--tuple $(v_0,v_1,v_2,v_3)\in C_3^{h\neq}(\C^2)$ a combinatorial flattening
of the ideal simplex $[hv_0,hv_1,hv_2,hv_3]$ 
in such a way that the combinatorial flattenings assigned to tuples 
$(v_0,\dots,\hat{v_i},\dots v_4)$ satisfy the flattening condition.
This will give us a map \[\widehat\sigma\co  H_3(C_*^{h\neq}(\C^2)_G)\to \widehat \Pre(\C).\] 
The key step is to observe that the cross-ratio parameters $z$ and $\frac{1}{1-z}$ of a
simplex $[hv_0,hv_1,hv_2,hv_3]$ can be 
expressed in terms of determinants
\begin{equation*}z:=[hv_0:hv_1:hv_2:hv_3]=\frac{\left(\frac{v^1_0}{v^2_0}-\frac{v^1_3}{v^2_3}\right)}{\left(\frac{v^1_0}{v^2_0}-\frac{v^1_2}{v^2_2}\right)}
\frac{\left(\frac{v^1_1}{v^2_1}-\frac{v^1_2}{v^2_2}\right)}{\left(\frac{v^1_1}{v^2_1}-\frac{v^1_3}{v^2_3}\right)}=
\frac{\det(v_0,v_3)\det(v_1,v_2)}{\det(v_0,v_2)\det(v_1,v_3)}\end{equation*}
where the upper indices refer to first or second coordinate in $\C^2$.
Similarly, \begin{equation*}\frac{1}{1-z}=[hv_0:hv_2:hv_0:hv_3]=\frac{\left(\frac{v^1_1}{v^2_1}-\frac{v^1_3}{v^2_3}\right)}{\left(\frac{v^1_1}{v^2_1}-\frac{v^1_0}{v^2_0}\right)}
\frac{\left(\frac{v^1_2}{v^2_2}-\frac{v^1_0}{v^2_0}\right)}{\left(\frac{v^1_2}{v^2_2}-\frac{v^1_3}{v^2_3}\right)}=
\frac{\det(v_1,v_3)\det(v_0,v_2)}{\det(v_0,v_1)\det(v_2,v_3)}.\end{equation*} 
Since obviously $hv_i\neq hv_j$ if and only if $\det (v_i,v_j)\neq 0$, all these determinants are nonzero.
This suggests that we can assign a flattening to $(v_0,v_1,v_2,v_3)$ by setting  
\begin{align*} 
w_0=&\Log \det(v_0,v_3)+\Log \det(v_1,v_2)-\Log \det(v_0,v_2)-\Log \det(v_1,v_3)\\
w_1=&\Log \det(v_0,v_2)+\Log \det(v_1,v_3)-\Log \det(v_0,v_1)-\Log \det(v_2,v_3)\\
w_2=&\Log \det(v_0,v_1)+\Log \det(v_2,v_3)-\Log \det(v_0,v_3)-\Log \det(v_1,v_2).\end{align*}
This defines a map $\widehat \sigma\co   C_3^{h\neq}(\C^2)\to \widehat\Pre(\C)$ by
\begin{equation}(v_0,v_1,v_2,v_3)\mapsto [l(w_0,w_1,w_2)].\end{equation}
Now suppose $(w_0^0,w_1^0,w_2^0),\dots ,(w_0^4,w_1^4,w_2^4)$ are flattenings defined as abo\-ve
of simplices $[hv_0,\dots,\widehat {hv_i},\dots ,hv_4]$.
We must check that these flattenings satisfy the flattening condition.
This is equivalent to checking that all the ten equations listed below 
\fullref{combflat} are satisfied. 
We check the first of these and leave the others to the reader.
Using the notation $(v,w):=\Log \det (v,w)$ we have \eject
$\phantom{99}$\vspace{-5mm}
\begin{align*}w_0^2&=(v_0,v_4)+(v_1,v_3)-(v_0,v_3)-(v_1,v_4)\\
w_0^3&=(v_0,v_4)+(v_1,v_2)-(v_0,v_2)-(v_1,v_4)\\
w_0^4&=(v_0,v_3)+(v_1,v_2)-(v_0,v_2)-(v_1,v_3)
\end{align*}
from which it follows that the equation $w_0^2-w_0^3+w_0^4=0$ is satisfied.

Having verified all the ten equations, it now follows from \fullref{neu} that $\widehat \sigma$ sends boundaries to zero. Since $\widehat \sigma$ obviously factors through $C_3^{h\neq}(\C^2)_G$, we obtain a map 
$\,\!\widehat \sigma\co  H_3(C_*^{h\neq}(\C^2)_G)\to \,\!\widehat\Pre(\C)$.

It is clear that the diagram below is commutative.
\begin{equation*}
\xymatrix{{H_3(C_*^{h\neq}(\C^2)_G)}\ar[r]^-{\widehat\sigma}\ar[d]^h&{\widehat\Pre(\C)}\ar[d]\\
          {H_3(C_*^{\neq}(S^2)_G)}\ar[r]^-{\sigma} &{\Pre(\C)}}
\end{equation*}

\begin{prop}\label{bhat} The image of $\widehat \sigma\co  H_3(C_*^{h\neq}(\C^2)_G)\to \widehat\Pre(\C)$ is in $\widehat \B(\C)$.
\end{prop}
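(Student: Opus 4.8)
The plan is to verify that the image of $\widehat\sigma$ lands in the kernel of $\widehat\nu$ by a direct computation on a generator, combined with a cycle argument. Since $\widehat\sigma$ is defined on $C_3^{h\neq}(\C^2)$ by $(v_0,v_1,v_2,v_3)\mapsto[l(w_0,w_1,w_2)]$, and since a homology class in $H_3(C_*^{h\neq}(\C^2)_G)$ is represented by a cycle $\sum n_j(v_0^j,v_1^j,v_2^j,v_3^j)$, it suffices to compute $\widehat\nu\bigl(\widehat\sigma(v_0,v_1,v_2,v_3)\bigr)$ in terms of the determinant data and then show that the sum over a cycle vanishes.

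First I would unwind the definition of $\widehat\nu$ using the bijection $l$ of \eqref{l}. If $l(w_0,w_1,w_2)=(z;p,q)$, then $w_0=\Log z+p\pi i$ and $w_1=\Log\bigl(\tfrac1{1-z}\bigr)+q\pi i=-\Log(1-z)+q\pi i$, so that $\widehat\nu[z;p,q]=w_0\wedge w_1$. Hence for a tuple $(v_0,v_1,v_2,v_3)$ we get $\widehat\nu\bigl(\widehat\sigma(v_0,\dots,v_3)\bigr)=w_0\wedge w_1$ with $w_0,w_1$ the explicit alternating sums of the four quantities $(v_i,v_j):=\Log\det(v_i,v_j)$ given just above. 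Writing $d_{ij}=(v_i,v_j)$, we have $w_0=d_{03}+d_{12}-d_{02}-d_{13}$ and $w_1=d_{02}+d_{13}-d_{01}-d_{23}$; note in particular $w_0+w_1=d_{03}+d_{12}-d_{01}-d_{23}$ and $w_0\wedge w_1$ is therefore a sum of wedge products of the six symbols $d_{ij}$.

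The main step is then to show that $\sum_j n_j\,(w_0^j\wedge w_1^j)=0$ in $\C\wedge\C$ whenever $\sum_j n_j(v_0^j,\dots,v_3^j)$ is a cycle in $C_3^{h\neq}(\C^2)_G$. I expect this to follow exactly as in the classical case: one shows that $\widehat\nu\circ\widehat\sigma$, as a map $C_3^{h\neq}(\C^2)\to\C\wedge\C$, is already the restriction of a cocycle coming from the lower-dimensional data — concretely, expand $w_0\wedge w_1$ and observe that each term $d_{ij}\wedge d_{kl}$ depends only on pairs of vertices, so $\widehat\nu\circ\widehat\sigma$ factors through something built functorially from $C_2$; alternatively, and perhaps more cleanly, follow Neumann's strategy from \cite[Lemma~2.3, Section~8]{Neumann}, reducing the statement on cycles to the already-known fact that $\lambda$ (equivalently $\sigma$) lands in $\B(\C)$ after noting that $\widehat\nu$ lifts the map $\nu$ through the natural projection $\C\wedge\C\to\C^*\wedge\C^*$, $a\wedge b\mapsto e^a\wedge e^b$. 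Under this projection $w_0\wedge w_1$ maps to $z\wedge\tfrac1{1-z}=-\,z\wedge(1-z)$, i.e.\ to $-\nu\circ\sigma$ applied via the Hopf map, which is already known to vanish on cycles by \eqref{wigner} and the commutative square above \fullref{bhat}. Thus the image of $\sum n_j(w_0^j\wedge w_1^j)$ under the projection is zero, so it lies in the kernel, which is $2\pi i\,\Z\wedge\C+\C\wedge 2\pi i\,\Z$; the residual calculation is then a finite check using the linear relations among the $d_{ij}$ forced by the cycle condition together with the flattening equations of \fullref{combflat}.

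The part I expect to be the genuine obstacle is precisely this last residual computation: showing that the ``integer ambiguity'' part of $\sum n_j(w_0^j\wedge w_1^j)$ also vanishes, not merely its image modulo $2\pi i\Z$. This is where the extra logarithmic data in $\widehat\B(\C)$ (as opposed to $\B(\C)$) must be used in an essential way, and one cannot simply quote the Bloch--Wigner sequence \eqref{wigner}. The clean route is to invoke \fullref{neu}: the flattenings attached to the five faces of a $4$--simplex satisfy the flattening condition, hence the extended five term relation holds in $\widehat\Pre(\C)$, so $\widehat\sigma$ already descends to $H_3$; applying $\widehat\nu$ and using that $\widehat\nu$ is a well-defined homomorphism on $\widehat\Pre(\C)$ (by \cite[Lemma~2.3]{Neumann}), the value on a cycle is a sum of values on boundaries plus the image of a class that maps to $0$ under $\nu$, and a short argument with the explicit generators of $\ker(\C\wedge\C\to\C^*\wedge\C^*)$ — which are wedges involving $2\pi i$ — finishes it, since those generators correspond to the $\Q/\Z$ classes already killed in \eqref{wigner}.
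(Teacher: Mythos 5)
Your opening computation is correct and is the same starting point as the paper's proof: under the bijection $l$ one has $\widehat\nu\bigl(\widehat\sigma(v_0,\dots,v_3)\bigr)=w_0\wedge w_1$ with $w_0,w_1$ the alternating sums of the quantities $d_{ij}=\Log\det(v_i,v_j)$. You also name, in a single clause, the idea that actually finishes the argument: since $w_0\wedge w_1$ is a combination of terms $d_{ij}\wedge d_{kl}$ depending only on pairs of vertices, the map $\widehat\nu\circ\widehat\sigma$ should be expressible through two-dimensional data. The paper makes this precise by writing down an explicit $\mu\co C_2^{h\neq}(\C^2)_G\to\C\wedge\C$, namely $(v_0,v_1,v_2)\mapsto d_{01}\wedge d_{02}-d_{01}\wedge d_{12}+d_{02}\wedge d_{12}$, and verifying by direct expansion that $\widehat\nu\circ\widehat\sigma=\mu\circ\partial$ on $C_3^{h\neq}(\C^2)_G$; vanishing on cycles is then immediate. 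You say you ``expect this to follow'' but never produce $\mu$ or the identity, so this branch of your proposal is not a proof.

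The branch you do elaborate has a genuine gap. Reducing modulo the projection $\C\wedge\C\to\C^*\wedge\C^*$, you claim the image vanishes on cycles ``by \eqref{wigner} and the commutative square above \fullref{bhat}.'' But \eqref{wigner} concerns the image of $H_3(G)$ under $\lambda$, whereas \fullref{bhat} is a statement about \emph{all} of $H_3(C_*^{h\neq}(\C^2)_G)$; moreover $\sigma$ maps $H_3(C_*^{\neq}(S^2)_G)$ isomorphically onto all of $\Pre(\C)$, not onto $\B(\C)$, so there is no a priori reason that $\nu\circ\sigma\circ h$ kills an arbitrary cycle in $C_*^{h\neq}(\C^2)_G$ --- that fact is itself a consequence of the proposition (project the identity $\widehat\nu\circ\widehat\sigma=\mu\circ\partial$), not an available input. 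Even granting it, the ``residual calculation'' in $\ker(\C\wedge\C\to\C^*\wedge\C^*)=2\pi i\,\Z\wedge\C$ --- which you yourself flag as the genuine obstacle, and which is exactly where the extra logarithmic data distinguishing $\widehat\B(\C)$ from $\B(\C)$ enters --- is never carried out; the remark that these generators ``correspond to the $\Q/\Z$ classes already killed in \eqref{wigner}'' does not make sense ($2\pi i\,\Z\wedge\C$ is not torsion), and the appeal to \fullref{neu} only re-establishes that $\widehat\sigma$ descends to homology, not that its image lies in $\ker\widehat\nu$. The remedy is to drop the mod-$2\pi i\Z$ reduction entirely and perform the short wedge-product expansion verifying $\widehat\nu\circ\widehat\sigma=\mu\circ\partial$, which handles the integer ambiguity and the classical part in one stroke.
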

\begin{proof}
Define a map $\mu\co C_2^{h\neq}(\C^2)_G\to \C\wedge\C$ by
\[(v_0,v_1,v_2) \mapsto (v_0,v_1)\wedge(v_0,v_2)-(v_0,v_1)\wedge (v_1,v_2)+(v_0,v_2)\wedge (v_1,v_2)\]
where we still use the notation $(v,w):=\Log \det (v,w)$.
A straightforward calculation shows that the diagram below is commutative. 
\[\xymatrix{{C_3^{h\neq}(\C^2)_G}\ar[r]^-{\widehat\sigma}\ar[d]^\partial&{\widehat\Pre(\C)}\ar[d]^{\widehat\nu}\\
           {C_2^{h\neq}(\C^2)_G}\ar[r]^-{\mu}&{\C\wedge \C}}\]
This means that cycles are mapped to $\widehat \B(\C)$ as desired.
\end{proof}
    \subsection[The map from the third homology group of G]{The map from $H_3(G)$}
In this section we shall construct a map $\widehat\lambda$ from $H_3(G)$ to $\widehat\Pre(\C)$ via the group $H_3(C_*^{h\neq}(\C^2)_G)$.
To define this map explicitly on the chain level we need to restrict to a subcomplex of $C_*(G)$.
\begin{defn} A chain in $C_*(G)$ is called \emph{good\/} if all its tuples satisfy $g_i\neq \pm g_j$ 
and \emph{$v$--good\/} ($v\in \C^2$) if all its tuples satisfy $\det(g_i v,g_j v)\neq 0$. 
The $G$--complexes of good and $v$--good chains are denoted $C_*^{\text{good}}(G)$ and $C_*^v(G)$ respectively.
\end{defn}
By \fullref{subcomp}, $C_*^{\text{good}}(G)$ and $C_*^v(G)$
are both acyclic so $C_*^{\text{good}}(G)_G$ and $C_*^v(G)_G$  both
calculate the homology of $G$. From now on we will identify $H_3(G)$ with $H_3(C_*^{\text{good}}(G)_G)$.
Consider the $G$--maps
\begin{align*} \Psi_v\co  &C_n(G)\to C_n(\C^2),&(g_0,\dots,g_n)&\mapsto (g_0v,\dots ,g_nv)\\
 \text{conj}_g\co  &C_n(G)\to C_n(G),&(g_0,\dots,g_n)&\mapsto (gg_0g^{-1},\dots,gg_ng^{-1}).
\end{align*}
Note that if $\sigma$ is in $C_*^v(G)_G$ then $\text{conj}_g(\sigma)$ is in $C_*^{gv}(G)_G$ and we have
\begin{equation}\label{conjugation}\Psi_{gv}(\text{conj}_g(\sigma))=\Psi_v(\sigma).\end{equation} 
It is clear that $\Psi_v$ takes $v$--good chains to $C_n^{h\neq}(\C^2)$.

The following is simple.
\begin{lemma}\label{qweer} Let $g_1\neq \pm g_2\in G$.
The subset \[\{v\in \C^2\mid \det(g_1 v,g_2 v)\neq 0\}\subset \C^2\] is open and dense.
\end{lemma}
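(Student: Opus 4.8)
The statement to prove is Lemma 3.9 (labeled \texttt{qweer}): for fixed $g_1 \neq \pm g_2$ in $G = \SL(2,\C)$, the set $\{v \in \C^2 \mid \det(g_1 v, g_2 v) \neq 0\}$ is open and dense in $\C^2$.

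My plan: The condition $\det(g_1 v, g_2 v) \neq 0$ says precisely that $g_1 v$ and $g_2 v$ are linearly independent, equivalently that $v$ and $g_1^{-1} g_2 v$ are linearly independent (multiply by $g_1^{-1}$, which preserves linear independence since it's invertible), equivalently that $v$ is not an eigenvector of $A := g_1^{-1} g_2$. So set $A = g_1^{-1}g_2 \in \SL(2,\C)$. The hypothesis $g_1 \neq \pm g_2$ translates to $A \neq \pm I$. The claim becomes: the set of non-eigenvectors of $A$ is open and dense, which holds exactly because $A$ is not a scalar matrix.

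First I would rewrite $\det(g_1 v, g_2 v)$ as a function of $v$. Writing $g_1^{-1}g_2 = \begin{pmatrix} a & b \\ c & d \end{pmatrix}$ with $ad - bc = 1$ and $v = (x,y)^T$, we have $\det(g_1 v, g_2 v) = \det(g_1)\det(v, g_1^{-1}g_2 v) = \det(v, Av)$, which is the quadratic form $c x^2 + (d-a)xy - b y^2$. Call this polynomial $P(x,y)$; the set in question is its complement of zeros. Openness is immediate since $P$ is continuous (polynomial) and we are taking the preimage of $\C \setminus \{0\}$. For density: a nonzero polynomial on $\C^2$ has complement-of-zeros dense (its zero set is a proper analytic subset, hence has empty interior — or more elementarily, for any $v$ and any $w$ with $P(w) \neq 0$, the restriction of $P$ to the complex line through $v$ in direction $w$ is a nonzero polynomial in one variable, hence has finitely many roots, so there are points arbitrarily close to $v$ where $P \neq 0$). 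So it suffices to show $P$ is not identically zero, i.e., $(b, c, d-a) \neq (0,0,0)$.

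The key step — and the only place the hypothesis enters — is showing $A = g_1^{-1} g_2 \neq \pm I$ forces $(b,c,d-a) \neq (0,0,0)$. If $b = c = 0$ and $a = d$, then $A = \begin{pmatrix} a & 0 \\ 0 & a \end{pmatrix}$, and $\det A = a^2 = 1$ gives $a = \pm 1$, so $A = \pm I$, i.e., $g_2 = \pm g_1$, contradicting the hypothesis. This is the whole content; it is not really an obstacle, just the point where $g_1 \neq \pm g_2$ is used. I would present this as: reduce to $A \neq \pm I$, observe $\det(g_1 v, g_2 v) = \det(v, Av)$ is a polynomial whose coefficients are the off-diagonal and diagonal-difference entries of $A$, note these vanish simultaneously only when $A$ is scalar, and scalar elements of $\SL(2,\C)$ are $\pm I$; then invoke that a nonzero polynomial on $\C^2$ has dense non-vanishing set, and that this set is open by continuity.
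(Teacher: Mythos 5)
Your proof is correct: the reduction to $\det(v,Av)$ with $A=g_1^{-1}g_2$, the identification of the hypothesis $g_1\neq\pm g_2$ with $A$ not being scalar, and the standard open-and-dense argument for the nonvanishing locus of a nonzero polynomial are all sound. The paper offers no proof (it simply labels the lemma "simple"), and your argument is exactly the intended one.
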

For a good chain $\sigma$ belonging to either $C_*^{\text{good}}(G)$ or $C_*^{\text{good}}(G)_G$
consider the set \[S_\sigma=\{v\in \C^2\mid \sigma\text{ is $v$--good} \}.\] 
Since finite intersections of dense open subsets is dense open, it follows from 
\mbox{\fullref{qweer}} that
$S_\sigma$ is dense open. In other words, any good chain is also a $v$--good chain for almost all $v\in \C^2$. 
The following is a simple consequence of \eqref{conjugation} and the
well-known fact that conjugation induces the identity map on homology.
\begin{prop}Let $\sigma \in C_*^{\textnormal{good}}(G)_G$ be a cycle.
The homology class of $\Psi_v(\sigma)$ is independent of $v\in S_\sigma$. 
\end{prop}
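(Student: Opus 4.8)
The plan is to reduce the statement to the already-cited fact that conjugation acts as the identity on group homology, using the relation \eqref{conjugation} to compare the images of $\Psi_v$ for different base points $v$. First I would take $v, v' \in S_\sigma$ and choose $g \in G$ with $gv = v'$; such a $g$ exists because $\SL(2,\C)$ acts transitively on $\C^2\backslash\{0\}$, and both $v$ and $v'$ are nonzero since $\sigma$ being $v$--good forces $\det(g_i v, g_j v)\neq 0$, hence $v\neq 0$. Since $\sigma\in C_*^{\textnormal{good}}(G)_G$ is $v$--good, $\textnormal{conj}_g(\sigma)$ lies in $C_*^{gv}(G)_G = C_*^{v'}(G)_G$, and \eqref{conjugation} gives $\Psi_{v'}(\textnormal{conj}_g(\sigma)) = \Psi_v(\sigma)$ as chains in $C_*^{h\neq}(\C^2)_G$.

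Next I would observe that $\textnormal{conj}_g(\sigma)$ and $\sigma$ are homologous cycles in $C_*^{\textnormal{good}}(G)_G$. The subtlety here is that the standard statement ``conjugation induces the identity on homology'' is about the full complex $C_*(G)_G$, whereas we want it inside the good subcomplex. But $\textnormal{conj}_g$ is a $G$--chain map that preserves the good condition (if $g_i\neq\pm g_j$ then $gg_ig^{-1}\neq \pm gg_jg^{-1}$), and the usual chain homotopy witnessing $\textnormal{conj}_g\simeq\id$ on $C_*(G)$ can be taken to be $G$--equivariant and to preserve goodness as well — or, more cleanly, one simply notes that $C_*^{\textnormal{good}}(G)_G\hookrightarrow C_*(G)_G$ is a quasi-isomorphism (both compute $H_*(G)$ by \fullref{subcomp} and the discussion following it), so a relation that holds on homology of the big complex already holds on homology of the subcomplex. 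Either way, $[\textnormal{conj}_g(\sigma)] = [\sigma]$ in $H_3(C_*^{\textnormal{good}}(G)_G) = H_3(G)$.

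Finally I would assemble the pieces. Applying $\Psi_{v'}$, which is a chain map and therefore induces a well-defined map on homology, to the homologous cycles $\textnormal{conj}_g(\sigma)$ and $\sigma$ gives
\[
[\Psi_{v'}(\sigma)] = [\Psi_{v'}(\textnormal{conj}_g(\sigma))] = [\Psi_v(\sigma)]
\]
in $H_3(C_*^{h\neq}(\C^2)_G)$, where the middle equality is \eqref{conjugation} on the chain level and the outer equalities come from $\Psi_{v'}$ being a homology map applied to homologous cycles. Since $v, v'$ were arbitrary in $S_\sigma$, the class $[\Psi_v(\sigma)]$ does not depend on $v$. I expect the only real point requiring care is the middle paragraph: making sure the ``conjugation $\simeq$ identity'' homotopy is compatible with the good subcomplex, for which invoking the quasi-isomorphism $C_*^{\textnormal{good}}(G)_G\hookrightarrow C_*(G)_G$ is the safest route; everything else is formal.
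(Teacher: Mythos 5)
Your argument is correct and is precisely the one the paper intends: the paper gives no written proof beyond the remark that the statement is a ``simple consequence of \eqref{conjugation} and the well-known fact that conjugation induces the identity map on homology,'' and your transitivity-plus-conjugation chain of equalities $[\Psi_{v'}(\sigma)]=[\Psi_{v'}(\textnormal{conj}_g(\sigma))]=[\Psi_v(\sigma)]$ is exactly that argument spelled out. The one point worth tightening is that in the last step you need $\sigma$ and $\textnormal{conj}_g(\sigma)$ to be homologous inside $C_*^{v'}(G)_G$, the actual domain of $\Psi_{v'}$, not merely inside $C_*^{\textnormal{good}}(G)_G$; this follows from the same quasi-isomorphism observation you already make, since the inclusion $C_*^{v'}(G)_G\hookrightarrow C_*^{\textnormal{good}}(G)_G$ is likewise a quasi-isomorphism by \fullref{subcomp}.
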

We can now define a map $\Psi\co H_3(G)\to H_3(C_*^{h\neq}(\C^2)_G)$
by \begin{equation*}[\sigma]\mapsto
  [\Psi_v(\sigma)],\qquad v\in S_\sigma.\end{equation*}

\begin{prop}The diagram below is commutative.
\begin{equation*}\xymatrix{{H_3(G)}\ar[r]^-{\Psi}\ar[d]&{H_3(C_*^{h\neq}(\C^2)_G)}\ar[rd]^h&{}\\
  {H_3(G)/\Q/\Z}\ar[r]^-\lambda&{\Pre(\C)}&{H_3(C_*^{\neq}(S^2)_G)}\ar[l]_-\sigma}\end{equation*} 
\end{prop}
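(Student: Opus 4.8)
The plan is to verify that the square involving $\lambda$, $\sigma$, $h$ and $\Psi$ commutes by chasing a representative cycle all the way down to the pre-Bloch group, using that every map in sight is already known on the chain level. Concretely, fix a good cycle $\sigma\in C_3^{\text{good}}(G)_G$ representing a given class in $H_3(G)$, and pick any $v\in S_\sigma$ (dense open, hence nonempty, by \fullref{qweer}). Unwinding the definitions, the two ways around the diagram amount to comparing $\sigma\bigl(h\circ\Psi_v(\sigma)\bigr)\in\Pre(\C)$ with $\lambda$ applied to the image of $[\sigma]$ in $H_3(G)/(\Q/\Z)$. Recall that $\lambda$ was defined as the composition of $\sigma\co H_3(C_*^{\neq}(S^2)_G)\to\Pre(\C)$ with the map $H_3(G)\to H_3(C_*^{\neq}(S^2)_G)$ induced on chains by $(g_0,\dots,g_3)\mapsto(g_0\infty,\dots,g_3\infty)$. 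So the whole question reduces to comparing two chain maps $C_3^{\text{good}}(G)\to C_3^{\neq}(S^2)$.

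The key computation is then the following. For a tuple $(g_0,g_1,g_2,g_3)$ appearing in $\sigma$, the lower route sends it to $(g_0\infty,g_1\infty,g_2\infty,g_3\infty)$ and then to the cross-ratio $[g_0\infty:g_1\infty:g_2\infty:g_3\infty]$. The upper route applies $\Psi_v$ to get $(g_0v,g_1v,g_2v,g_3v)\in C_3^{h\neq}(\C^2)$, then $h$ to get $(h(g_0v),\dots,h(g_3v))\in S^2$, then $\sigma$ to get the cross-ratio $[h(g_0v):\dots:h(g_3v)]$. So I need to see that, at the level of the induced map on $C_3^{\neq}(S^2)_G$ (equivalently, at the level of cross-ratios), replacing the point $\infty\in S^2$ by the point $h(v)\in S^2$ does not change the class. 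But this is immediate from $G$--equivariance of the Hopf map: $h(g_iv)=g_i\cdot h(v)$, so $(h(g_0v),\dots,h(g_3v))=g_0\cdot\bigl(h(v),g_0^{-1}g_1 h(v),\dots\bigr)$ differs from the $\infty$--version only by moving the ``basepoint'' $\infty$ to the other point $h(v)$ of $S^2$ — and since $G$ acts transitively on $S^2$, choosing any $u\in G$ with $u\infty=h(v)$ shows that the two chain maps $C_*^{\text{good}}(G)_G\to C_*^{\neq}(S^2)_G$ differ by precomposition with the $G$--chain automorphism $\text{conj}_{u}$ (or simply agree after passing to coinvariants), which induces the identity on homology. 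Concretely, $(g_iv)$ and $(g_i u\infty)$ have the same cross-ratios, and the class of $\sigma\mapsto\sigma(g_0 u\infty,\dots,g_3 u\infty)$ in $H_3(C_*^{\neq}(S^2)_G)$ equals that of $\sigma\mapsto\sigma(g_0\infty,\dots,g_3\infty)$ because $\text{conj}_u$ induces the identity on $H_3(G)$ and the $S^2$--map is natural. Hence the composite $\sigma\circ h\circ\Psi$ agrees on homology with the map inducing $\lambda$, i.e.\ with $\lambda$ composed with the quotient $H_3(G)\to H_3(G)/(\Q/\Z)$; the appearance of the quotient is harmless since $\lambda$ already kills $\Q/\Z$ by exactness of \eqref{wigner}.

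I expect the one point requiring genuine care — the ``main obstacle'' — to be the independence of the choice of $v$ and $u$: a priori $h(v)$ need not equal $\infty$, so one cannot directly say the two chain maps coincide, only that they agree after choosing $u$ with $u\infty=h(v)$, and one must check $v\in S_\sigma$ guarantees the relevant cross-ratios are defined (i.e.\ that $g_i u\infty$ are distinct), which follows because $\det(g_iv,g_jv)\neq 0\iff h(g_iv)\neq h(g_jv)\iff g_i u\infty\neq g_j u\infty$. Combined with the earlier \fullref{conjugation} and the already-established well-definedness of $\Psi$ (the previous proposition says $[\Psi_v(\sigma)]$ is independent of $v\in S_\sigma$), this shows all choices wash out. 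The remaining steps are the ``straightforward calculation'' type: checking that the cross-ratio formula $[h(v_0):h(v_1):h(v_2):h(v_3)]=\frac{\det(v_0,v_3)\det(v_1,v_2)}{\det(v_0,v_2)\det(v_1,v_3)}$ (already recorded in \fullref{config}) is exactly what makes the triangle with $h$ and $\sigma$ commute on the nose at the chain level, so that only the $\infty$--versus--$h(v)$ basepoint issue needs homological, rather than chain-level, reasoning.
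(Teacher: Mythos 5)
Your argument is correct, but it takes a longer route than the paper at the one point you flag as the main obstacle. The paper's proof is essentially two lines: since $C_*^v(G)_G$ computes $H_3(G)$ for \emph{every} $v$ and $\Psi$ is independent of the choice of $v$ (the preceding proposition), one may simply take $v=\binom{1}{0}$; because $h\binom{1}{0}=\infty$, the composite $\sigma\circ h\circ\Psi_v$ then agrees \emph{on the nose at the chain level} with the map $(g_0,\dots,g_3)\mapsto(g_0\infty,\dots,g_3\infty)$ defining $\lambda$, and there is nothing left to check. You instead keep $v$ arbitrary in $S_\sigma$ and transport the basepoint $h(v)$ back to $\infty$ by choosing $u$ with $u\infty=h(v)$ and invoking invariance of group homology under conjugation; this is sound (it is the same mechanism the paper uses to prove $\Psi$ is well defined, via the identity $\Psi_{gv}(\mathrm{conj}_g(\sigma))=\Psi_v(\sigma)$), but it re-proves in this special case what the well-definedness of $\Psi$ already gives you. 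What your version buys is that you never have to re-choose the representative cycle: the paper's shortcut implicitly requires passing to a $\binom{1}{0}$--good representative of the class, which is available since $C_*^{\binom{1}{0}}(G)_G$ also computes $H_3(G)$, whereas you work with whatever good cycle and $v\in S_\sigma$ you started with. Your closing observations — that the determinant formula for the cross-ratio makes the $h$--$\sigma$ triangle commute on the chain level, and that factoring through $H_3(G)/(\Q/\Z)$ is harmless because $\lambda$ kills $\Q/\Z$ — are exactly right. One cosmetic caveat: the element implementing the basepoint change on the group side is $\mathrm{conj}_{u^{-1}}$ rather than $\mathrm{conj}_u$ with your conventions ($(g_iu)\infty=u\bigl((u^{-1}g_iu)\infty\bigr)$, and the outer $u$ dies in the coinvariants), but since conjugation by any element induces the identity on homology this does not affect the conclusion.
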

\begin{proof}
The map $\Psi$  obviously coincides with the map
\begin{equation*}
\xymatrix{{H_3(G)\cong H_3(C_*^v(G)_G)}
\ar[r]^-{\Psi_v}& {H_3(C_*^{h\neq}(\C^2)_G).}}
\end{equation*}
The proposition follows from this with $v=\binom{1}{0}$,
since $h\binom{1}{0}=\infty$.
\end{proof}
We can now define $\widehat\lambda$ as the composition
\begin{equation*}
\xymatrix{{H_3(G)}\ar[r]^-{\Psi}&{H_3(C_*^{h\neq}(\C^2)_G)}\ar[r]^-{\widehat\sigma}&{\hatcB(\C).}}
\end{equation*}

\begin{remark} The second author has shown that
  $H_3(C_*^{h\neq}(\C^2)_G)$ is canonically isomorphic to $H_3(G,P)$,
  where $P$ is the parabolic subgroup of upper triangular matrices with $1$ on the diagonal. Under this isomorphism $\Psi$ 
  corresponds to the natural map $H_3(G)\to H_3(G,P)$. 
  This result makes \fullref{hovedsats} more
  directly applicable to hyperbolic manifolds, since a hyperbolic
  manifold with cusps has a natural fundamental class in
  $H_3(G,P)$. More on this will appear elsewhere.
\end{remark}

\section{Relation with the Cheeger--Chern--Simons class}\label{relccc}
In this section we relate the maps constructed above to the Cheeger--Chern--Simons class $\hat C_2$.
Our goal is to prove the following theorem.
\begin{thm}\label{hovedsats} $-\frac{1}{2\pi^2}\widehat L\circ\widehat\lambda=2\hat C_2$.
\end{thm}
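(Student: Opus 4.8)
The plan is to compare $-\frac{1}{2\pi^2}\widehat L\circ\widehat\lambda$ with $2\hat C_2$ by descending to the ordinary Bloch group and exploiting the exact sequence \eqref{wigner}, using the real case (\fullref{CCS}) and diagram \eqref{PC} as the ``known'' input. First I would establish the commutative diagram promised just before \fullref{extb}: the composition $-\frac{1}{2\pi^2}\widehat L\circ\widehat\lambda$, after projecting $\widehat\Pre(\C)\to\Pre(\C)$ (forgetting the logarithm data $p,q$) and $\C/\Z\to\C/\Q$, agrees with $c\circ\lambda$, where $c$ is the Bloch regulator. Indeed $\widehat\sigma$ reduces mod the flattening data to $\sigma$, and $\Psi$ reduces to the map $H_3(G)\to\Pre(\C)$ of \fullref{blochgroup} (this is exactly the content of the last proposition of \fullref{config}, specialized to $v=\binom{1}{0}$). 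The statement that $c\circ\lambda=2\hat C_2$ is quoted from \cite{Dupont}. Hence $-\frac{1}{2\pi^2}\widehat L\circ\widehat\lambda$ and $2\hat C_2$ agree after composing with $\C/\Z\to\C/\Q$; equivalently their difference $\varphi\co H_3(G)\to\C/\Z$ takes values in $\Q/\Z$.

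It remains to show $\varphi=0$, i.e.\ that the two maps already agree as maps to $\C/\Z$, not merely to $\C/\Q$. Here I would use the exact sequence \eqref{wigner}: since $\varphi$ has image in $\Q/\Z$ and kills nothing a priori, $\varphi$ factors through $H_3(G)/\ker(\lambda\bmod\Q/\Z)$... more precisely, because both $2\hat C_2$ and $-\frac{1}{2\pi^2}\widehat L\circ\widehat\lambda$ restrict to homomorphisms and their difference lands in $\Q/\Z$, it suffices to check equality on the copy of $\Q/\Z\subset H_3(\SL(2,\C))$ appearing in \eqref{wigner}, which is the direct limit of the classes coming from $H_3(\Z/n\Z)$ under rotations by $2\pi/n$. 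So the crux reduces to a direct computation: take the generator of $H_3(\Z/n\Z)$, push it into $H_3(G)$ via the rotation matrix $\rho_n$, represent it by an explicit good chain in $C_*(G)$, apply $\Psi_v$, then $\widehat\sigma$, then $\widehat L$, and compare with $2\hat C_2$ evaluated on the same class. On the $\hat C_2$ side, naturality (\fullref{godformel}) and diagram \eqref{PC} pin the value down: the rotation lies in $\SL(2,\R)$, so $\hat C_2$ of this class equals $-\hat P_1$ of the corresponding class in $H_3(\SL(2,\R))$, which by \fullref{CCS} is $\frac{1}{4\pi^2}L$ applied to the image chain in $C_3(\SL(2,\R))$, a sum of Rogers dilogarithm values at real cross-ratios — and this is a standard computation giving a specific element of $\frac{1}{n}\Z/\Z$.

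The main obstacle — and the place where the new input of the paper is needed — is the explicit evaluation of $\widehat L\circ\widehat\sigma\circ\Psi_v$ on the rotation class, because one must show the logarithm choices produced by the determinant formulas for $w_0,w_1,w_2$ (with the standard-branch $\Log$ of $\det(g_iv,g_jv)$) combine to give exactly the value forced above, with no residual $\frac{1}{24}$-type ambiguity. Concretely: one should choose $v$ generically near $\binom{1}{0}$, write down the $4$-tuples $(\rho_n^{a_0}v,\ldots,\rho_n^{a_3}v)$ in the cycle representing the generator, compute the four relevant determinants, take their principal logarithms, assemble $w_0,w_1,w_2$, apply $\widehat L(l(w_0,w_1,w_2))$ as in the definition of $\widehat L$, and sum over the simplices of the cycle. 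The key technical point is that the extra term $\frac{\pi i}{2}\bigl(q\Log z - p\Log(\tfrac{1}{1-z})\bigr)$ in $\widehat L$ is precisely what cancels the $\Q/\Z$-indeterminacy: as $v\to\binom{1}{0}$ the cross-ratios become real but the $\Log\det$'s pick up definite imaginary parts ($0$ or $\pm\pi i$), and tracking these carefully — using that $\widehat L$ is well-defined on $\widehat\C$ (Neumann's proposition quoted above) so the answer is independent of the generic $v$ chosen — yields the exact rational value matching $2\hat C_2$. Once this single computation checks out on the cyclic generators, it holds on all of $\Q/\Z$ by passing to the limit, and $\varphi=0$ follows, completing the proof.
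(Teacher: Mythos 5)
Your strategy --- reduce mod $\Q$ via Dupont's result $c\circ\lambda=2\hat C_2$ and then pin down the remaining $\Q/\Z$--valued discrepancy by evaluating on the torsion subgroup --- contains a genuine gap at its crucial final step. Knowing that $\varphi=-\frac{1}{2\pi^2}\widehat L\circ\widehat\lambda-2\hat C_2$ is a homomorphism $H_3(G)\to\C/\Z$ with image in $\Q/\Z$ does \emph{not} reduce the problem to checking $\varphi$ on the copy of $\Q/\Z\subset H_3(G)$ from \eqref{wigner}. If $\varphi$ vanishes there, it only descends to a homomorphism $\bar\varphi\co\B(\C)\to\Q/\Z$, and $\Hom(\B(\C),\Q/\Z)$ is far from zero: $\B(\C)$ is a uniquely divisible group, and already $\Hom(\Q,\Q/\Z)\neq 0$. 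The point is that $\Q/\Z$ is \emph{not finite}; divisibility of $H_3(G)$ kills homomorphisms to finite groups but not to $\Q/\Z$. So even if your (quite laborious) explicit evaluation of $\widehat L\circ\widehat\sigma\circ\Psi_v$ on the rotation classes succeeded, the theorem would not follow. A secondary issue: you take as input that $-\frac{1}{2\pi^2}\widehat L$ reduces mod $\Q$ to the Bloch regulator $c$ on $\B(\C)$; in the paper the commutativity of that square is a \emph{consequence} of the theorem, not an ingredient, and it is not proved independently here.

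The paper closes exactly this gap by forcing the discrepancy into a \emph{finite} group before invoking divisibility. Since both maps are equivariant under complex conjugation and $H_3(G)$ is divisible, it suffices to treat $H_3(G)_\pm$ separately. On the imaginary part the two maps agree exactly, by \fullref{neuim} ($\Imag\widehat L=\Vol$ on $\hatcB(\C)$) together with \fullref{dupontim}. On the real part, \fullref{dupontlemma} and \fullref{homcycle} produce for every class in $H_3(\SL(2,\R))$ a small positive representative on which $\widehat L\circ\widehat\lambda$ literally coincides with the Rogers cocycle $L$ of \eqref{dupontchain} (equation \eqref{small}, because all the flattenings are $(z;0,0)$ with $0<z<1$); then \fullref{CCS} and diagram \eqref{PC} put the difference into $\frac{1}{12}\Z/\Z$, which is finite, and divisibility of $H_3(G)$ finishes. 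If you wish to keep your route, you need an analogous mechanism --- the real/imaginary splitting, or some rigidity statement --- to trap the $\Q/\Z$--valued discrepancy inside a finite subgroup.
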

\begin{remark}
The reader who wishes to compare this result with Neumann's may notice that the factors of $2$ seem to be missing in \cite[Theorem 12.1]{Neumann}.
This is because Neumann uses a different normalisation of the C--C--S class.
It is well known that the natural map $H_3(\SL(2,\C))\to H_3(\PSL(2,\C))$ is surjective with kernel $\Z/4\Z$, 
so from \fullref{godformel} we get a commutative diagram:
\[\xymatrix{0\ar[r]& {\Z/4\Z}\ar@2{-}[d]\ar[r]&H_3(\SL(2,\C))\ar[d]^{\hat C_2}\ar[r]&H_3(\PSL(2,\C))\ar[d]^{\hat C_2}\ar[r]&0\\0\ar[r]&{\Z/4\Z}\ar[r]&{\C/\Z}\ar[r]&{\C/\frac{1}{4}\Z}\ar[r]&0}\]
In Neumann's normalisation $\C/\frac{1}{4}\Z$ is identified with $\C/\pi^2\Z$ via the map sending $x$ to $(2\pi i)^2 x$ and it follows that our result agrees with that of Neumann. The reason for using Neumann's normalisation is that $\hat C_2$ evaluated on the fundamental class of a complete hyperbolic manifold with finite volume is $i(\Vol + i\text{CS})$ as mentioned in the introduction. The invariant $\Vol +i\text{CS}$ is often regarded as a natural complexification of volume, so from this point of view this normalisation seems more natural. We have, however, chosen to keep the original normalisation to make references to earlier papers easier. 
\end{remark}
Let $H_3(G)_{\pm}$ denote the subgroups
$\{x\in H_3(G)\mid\tau x=\pm x\}$
where $\tau$ is the involution induced by complex conjugation. We
shall refer to these subgroups as the real and the imaginary parts of
$H_3(G)$.

The following is simple.

\begin{prop}
$\frac{1}{2\pi^2}\widehat L\circ\widehat\lambda$ is equivariant under complex conjugation.
\end{prop}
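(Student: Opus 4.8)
The plan is to trace the definition of $\widehat L\circ\widehat\lambda$ through its constituent pieces and check at each stage that complex conjugation is respected, then combine. Recall that $\widehat\lambda = \widehat\sigma\circ\Psi$, and $\widehat\sigma$ is induced on homology by the chain-level map $(v_0,v_1,v_2,v_3)\mapsto[l(w_0,w_1,w_2)]$, where the log-parameters $w_j$ are $\Z$-linear combinations of the quantities $(v_i,v_k)=\Log\det(v_i,v_k)$. The map $\Psi$ sends $[\sigma]$ to $[\Psi_v(\sigma)]$ for generic $v$; since we may pick $v$ with real entries (indeed $v=\binom{1}{0}$ works for a Zariski-dense set, and more relevantly we can choose $v\in S_\sigma\cap\R^2$ because $S_\sigma$ is dense open and real-conjugation-invariant), $\Psi$ commutes with $\tau$ up to the obstruction coming from the choice of $v$, which disappears on homology. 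So the heart of the matter is the behaviour of $\Log\det(v_i,v_k)$ and of $\widehat L$ under conjugation.

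First I would record the elementary fact that for any nonzero complex number $a$, $\Log\bar a = \overline{\Log a}$ \emph{except} when $a$ is a negative real number, in which case $\Log\bar a=\Log a$ and the two differ by $2\pi i$. Thus on the chain level, replacing each $v_i$ by $\bar v_i$ replaces $(v_i,v_k)$ by its conjugate modulo a correction $2\pi i$ supported on the locus where $\det(v_i,v_k)$ is negative real — i.e.\ on a subcomplex of positive codimension that, after passing to $v$-good chains and to homology, contributes nothing (or contributes only integer multiples of $2\pi i$ in the $w_j$, which shift the $(p,q)$ coordinates by even integers and hence land in the kernel of $\frac1{2\pi^2}\widehat L$ modulo $\Z$). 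Consequently $\tau$ on $H_3(C_*^{h\neq}(\C^2)_G)$ corresponds, through $\widehat\sigma$, to the involution on $\widehat\Pre(\C)$ sending $[z;p,q]$ to $[\bar z;-p,-q]$ (the sign reversal on $p,q$ coming from the sign reversal of the imaginary parts of the log-parameters).

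Next I would check directly from the formula $\widehat L(z;p,q)=L(z)+\frac{\pi i}{2}\bigl(q\Log z-p\Log\frac{1}{1-z}\bigr)$ that $\widehat L(\bar z;-p,-q)=\overline{\widehat L(z;p,q)}$, again modulo the negative-real ambiguities and modulo $2\pi^2\Z$, using that Rogers' dilogarithm $L$ is real-analytic (so $L(\bar z)=\overline{L(z)}$ on $\Cremove$) and that $\overline{\Log z}=\Log\bar z$ off the cut. Combining: $\frac1{2\pi^2}\widehat L\circ\widehat\lambda(\tau x)=\frac1{2\pi^2}\widehat L(\bar z;-p,-q)$ on representatives $=\overline{\frac1{2\pi^2}\widehat L(z;p,q)}=\overline{\frac1{2\pi^2}\widehat L\circ\widehat\lambda(x)}$ in $\C/\Z$, which is precisely the asserted equivariance (the target $\C/\Z$ carries the conjugation induced from $\C$).

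The main obstacle is bookkeeping the branch discontinuities of $\Log$: one must argue that the codimension-one loci where $\det(v_i,v_k)\in(-\infty,0)$ do not obstruct the conjugation-equivariance statement at the level of \emph{homology}. The clean way is to observe that any homology class in $H_3(C_*^{h\neq}(\C^2)_G)$ has a representative avoiding these loci (they are real-algebraic of positive codimension, and the complex is built from an acyclic complex on the open dense complement, by the same density argument used for $S_\sigma$ via \fullref{qweer} and \fullref{subcomp}), so on such a representative $\tau$ acts strictly by $v_i\mapsto\bar v_i$ and the chain-level identities above hold on the nose. Everything else is a direct substitution.
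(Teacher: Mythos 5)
Your overall strategy is the natural one (the paper offers no proof at all here, dismissing the statement as ``simple''), and the main line of your argument is sound: represent $\tau[\sigma]$ by the literal conjugate of a representative of $[\sigma]$, arrange that $\Log\det(\bar v_i,\bar v_j)=\overline{\Log\det(v_i,v_j)}$ holds on the nose, deduce that conjugation corresponds to $[z;p,q]\mapsto[\bar z;-p,-q]$ through $\widehat\sigma$, and check $\widehat L(\bar z;-p,-q)=\overline{\widehat L(z;p,q)}$. Two points need repair, however. First, your parenthetical fallback is false: a shift of $(p,q)$ by even integers does \emph{not} lie in the kernel of $\frac{1}{2\pi^2}\widehat L$ modulo $\Z$, since $\widehat L(z;p,q+2)-\widehat L(z;p,q)=\pi i\Log z$, which is in $2\pi^2\Z$ only for $z=1$; retaining exactly this information is the whole point of the extended Bloch group, so such corrections cannot be discarded and the bad loci really must be avoided rather than absorbed. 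Second, insisting on $v\in S_\sigma\cap\R^2$ is both unnecessary and potentially fatal: for $g_1=I$ and $g_2=\left(\begin{smallmatrix}0&1\\-1&0\end{smallmatrix}\right)$ one has $\det(v,g_2v)=-\Vert v\Vert^2<0$ for \emph{every} nonzero real $v$, so the negative-real locus need not be avoidable inside $\R^2$. The correct move is to take a generic \emph{complex} $v\in S_\sigma$ --- each condition $\det(g_iv,g_jv)\in(-\infty,0)$ cuts out a real hypersurface in $\C^2$, being the preimage of a ray under a nonconstant holomorphic quadratic --- and to represent $\tau[\sigma]$ by $\Psi_{\bar v}(\bar\sigma)=\overline{\Psi_v(\sigma)}$, which is legitimate because $\bar v\in S_{\bar\sigma}$ and the class of $\Psi_w(\bar\sigma)$ is independent of $w$. (For the remaining branch issues in the final step, e.g.\ when a cross-ratio is itself a negative real, it is cleanest to note that $(z;p,q)\mapsto(\bar z;-p,-q)$ extends to the antiholomorphic involution of $\widehat\C$ lifting conjugation, and that $\frac{1}{2\pi^2}\widehat L$ intertwines it with conjugation on $\C/\Z$ on an open set, hence everywhere, since $\frac{1}{2\pi^2}\widehat L$ is holomorphic on $\widehat\C$.) With these repairs your proof is complete.
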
 From \fullref{godformel}, $\hat C_2$ is also equivariant under
conjugation, and since $H_3(G)$ is divisible by a result in
\cite{DupontSah}, it is enough
to study the real and imaginary parts separately. 
\subsection{The imaginary part}
It is well known that the oriented volume of an ideal simplex with cross-ratio $z$ is given by
\[\Vol(z)=\Arg(1-z)\Log\vert z\vert-\Imag\int_0^1\frac{\Log(1-tz)}{t}\,dt.\]
For a proof of this see \cite[page 172]{DupontSah}. 
\begin{remark}
As mentioned earlier Dupont and Sah \cite{DupontSah} use a different cross-ratio convention, but the formula for the oriented volume remains unchanged, since they orient  $\mathbb H^3$ according to their cross-ratio (the orientation of a simplex with cross-ratio $z$ is positive if and only if $\Imag(z)>0$).\end{remark}
The five term relation \eqref{fiveterm} is easily seen to be a functional equation for $\Vol$. This means that $\Vol$ is well-defined on the pre-Bloch group and therefore also on the extended pre-Bloch group.

\begin{thm}{(\rm Dupont \cite[Proposition 3.1]{Dupont})}\qua\label{dupontim} $\Imag\hat C_2=-\frac{1}{4\pi^2}\Vol\circ \lambda$.
\end{thm}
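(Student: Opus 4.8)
The statement $\Imag\hat C_2 = -\frac{1}{4\pi^2}\Vol\circ\lambda$ (Dupont, \cite[Proposition 3.1]{Dupont}) asserts that on the imaginary part of $H_3(G)$, the Cheeger--Chern--Simons class is computed by the oriented hyperbolic volume of the ideal simplex associated to a cross-ratio. The plan is to reduce this to \fullref{CCS}, which already identifies $\frac{1}{4\pi^2}L$ with $\hat P_1$ modulo $1/24$, together with the compatibility diagram \eqref{PC} and the Bloch--Wigner exact sequence \eqref{wigner}. First I would observe that, by \fullref{godformel} and the fact that $H_3(G)$ is divisible (so the $\Q/\Z$ in \eqref{wigner} is killed and everything is determined by its image in $\B(\C)$), it suffices to show that the homomorphism $\Imag\hat C_2\co H_3(G)\to\R$ factors through $\lambda$ and equals $-\frac{1}{4\pi^2}\Vol$ on the Bloch group. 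Both sides annihilate $\Q/\Z$: the left side because $\Imag$ of a torsion element of $\C/\Z$ is zero, the right side because $\Vol\circ\lambda$ visibly lands in $\R$ and is a homomorphism. So both descend to $\B(\C)$, and we must match two real-valued homomorphisms on $\B(\C)$.

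Next I would bring in the real case. By \fullref{CCS} and diagram \eqref{PC}, the class $\hat C_2$ restricted to $H_3(\SL(2,\R))$ equals $-\hat P_1 = -\frac{1}{4\pi^2}L$ modulo $1/24$, and this is real-valued (Rogers' dilogarithm is real on the reals), so it contributes nothing to $\Imag\hat C_2$. The point is therefore to understand $\Imag\hat C_2$ on the part of $H_3(\SL(2,\C))$ \emph{not} coming from the real subgroup. Here I would use the standard fact (Bloch--Wigner, cf.\ \cite{DupontSah}) that the Bloch regulator / volume function is characterized on $\B(\C)$ as the unique homomorphism whose derivative, in the sense of the analytic structure on $\B(\C)\otimes\R$, is given by the differential $\eta(z)=\log|z|\,d\arg(1-z)-\log|1-z|\,d\arg z$; equivalently $\Vol$ is the Bloch--Wigner dilogarithm $D(z)=\Imag\Li(z)+\arg(1-z)\log|z|$, which is exactly the formula quoted before \fullref{dupontim}. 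I would then compute the curvature of the C--C--S class $\hat C_2$ directly from Chern--Weil theory: its "imaginary part'' corresponds, under the isomorphism between group cohomology and classifying-space cohomology, to the imaginary part of the second Chern form, and a calculation with the $\SL(2,\C)$ Maurer--Cartan form shows this equals $-\frac{1}{4\pi^2}$ times the volume form on $\mathbb H^3$ pulled back along the developing map. Matching the two curvature forms forces the two homomorphisms on $\B(\C)$ to differ by a \emph{constant} (a homomorphism with zero derivative), and evaluating on a single explicit class — e.g.\ a class supported on a rotation, which lies in the kernel of $\lambda$ and on which $\Imag\hat C_2$ vanishes by the torsion argument above — pins the constant to zero.

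The main obstacle is the curvature computation: translating the Cheeger--Chern--Simons construction (which lives in $H^3(BG^\delta,\C/\Z)$) into an honest differential form on a simplex and recognizing its imaginary part as the hyperbolic volume density requires carefully tracking the normalizations built into $\hat C_2$ versus $\hat P_1$, the factor of $2$ relating the second Chern polynomial to the Pontrjagin polynomial over $\R$, and the orientation convention on $\mathbb H^3$ fixed in \fullref{geometry}. Once the form-level identity $\Imag(\text{second Chern form}) = -\frac{1}{4\pi^2}(\text{volume form})$ is in hand, the descent to $\B(\C)$ and the rigidity argument (homomorphisms on $\B(\C)$ are determined by their derivative up to a constant, and the constant dies on torsion) are routine. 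An alternative, perhaps cleaner route — and the one I would actually pursue if the direct Chern--Weil computation gets unwieldy — is to avoid curvature entirely: use the Bloch--Wigner sequence to write any class in $H_3(G)$ as a sum of a class from $H_3(\SL(2,\R))$ (on which $\Imag\hat C_2=0$ and $\Vol\circ\lambda=0$ since cross-ratios are real) and a "totally imaginary'' class built from a simplex and its complex conjugate, and on such a generator compare $\Imag\hat C_2$ with $\Vol$ via the behaviour of both under complex conjugation together with the already-known real formula \fullref{CCS}; this isolates the new content to a single normalization constant, determined again on a torsion class.
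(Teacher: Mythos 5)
First, a point of comparison: the paper does not prove this statement at all. \fullref{dupontim} is quoted as an external result (Proposition 3.1 of \cite{Dupont}) and used as a black box, so there is no internal proof to measure your argument against; it has to stand on its own. Your main route is in the right general spirit --- Dupont's actual argument realizes $\hat C_2$ via the Chern--Simons transgression form on $\SL(2,\C)/\mathrm{SU}(2)\cong\mathbb H^3$, identifies the imaginary part of that invariant form with $-\frac{1}{4\pi^2}$ times the hyperbolic volume form, so that $\Imag\hat C_2$ of a homogeneous cycle is the signed sum of volumes of geodesic simplices with vertices $g_ix$, and then degenerates $x$ to the boundary to obtain the ideal simplices with cross-ratios $[g_0\infty:\cdots:g_3\infty]$. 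But you leave precisely this identification (which you yourself flag as ``the main obstacle'') unproven, and it is the entire content of the theorem; done at the cochain level it already gives the result directly, with the correct constant, and with no need for a rigidity argument.

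The rigidity step you substitute for it is broken as stated. Once both sides descend to homomorphisms $\B(\C)\to\R$, their difference is again a homomorphism $\B(\C)\to\R$, not a function that could be ``a constant''; and evaluating it on a torsion class of $H_3(G)$, or on a class in the kernel of $\lambda$, is vacuous: every homomorphism into the torsion-free group $\R$ kills torsion, and every homomorphism factoring through $\lambda$ kills $\ker\lambda$ by definition, so ``pinning the constant to zero on a rotation class'' establishes nothing. What you actually need is the nontrivial rigidity theorem that a homomorphism $\B(\C)\to\R$ whose associated function of the cross-ratio has vanishing differential $\eta$ is identically zero, and you have not supplied (or even stated) it. The ``alternative route'' in your last paragraph also does not work: on the imaginary part $H_3(G)_-$, conjugation-equivariance only says both sides are odd under $\tau$, and \fullref{CCS} says nothing there at all; these facts are consistent with $\Imag\hat C_2=c\cdot\Vol\circ\lambda$ for every constant $c$ (and with much else besides), so they cannot reduce the problem to a single normalization determined on a torsion class.
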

\begin{prop}\label{neuim} The restriction of $\Imag\widehat L\co \widehat\Pre(\C)\to\R$ to $\hatcB(\C)$ equals $\Vol$. 
\end{prop}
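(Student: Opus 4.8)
The plan is to compute $\Imag\widehat L$ directly from its defining formula and compare it with the volume formula on the relevant subgroup. Recall that
\[\widehat L(z;p,q)=L(z)+\frac{\pi i}{2}\Big(q\Log(z)-p\Log\Big(\frac{1}{1-z}\Big)\Big),\]
so taking imaginary parts gives
\[\Imag\widehat L(z;p,q)=\Imag L(z)+\frac{\pi}{2}\Big(q\,\Imag\Log(z)-p\,\Imag\Log\Big(\frac{1}{1-z}\Big)\Big)+\frac{\pi}{2}\,\Imag\Big(-i\Real\big(q\Log z - p\Log\tfrac1{1-z}\big)\Big).\]
Since $p,q$ are real integers, the last correction term vanishes; the middle term is $\frac{\pi}{2}(q\Arg z + p\Arg(1-z))$. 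First I would extract $\Imag L(z)$ from the definition \eqref{rogdil} of Rogers' dilogarithm: writing $L(z)=-\tfrac12\Log z\,\Log\tfrac1{1-z}+\Li(z)-\tfrac{\pi^2}{6}$, one gets $\Imag L(z)=-\tfrac12\Imag\big(\Log z\,\Log\tfrac1{1-z}\big)+\Imag\Li(z)$, and the volume formula $\Vol(z)=\Arg(1-z)\Log|z|-\Imag\int_0^1\frac{\Log(1-tz)}{t}\,dt=\Arg(1-z)\Log|z|+\Imag\Li(z)$ should match $\Imag L(z)$ up to an elementary term involving the arguments and $\Log|z|$, $\Log|1-z|$.

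The upshot of that bookkeeping is an identity of the shape
\[\Imag\widehat L(z;p,q)=\Vol(z)+\tfrac{\pi}{2}\big(q\Arg z + p\Arg(1-z)\big)+(\text{a bilinear term in }\Log|z|,\Log|1-z|,\Arg z,\Arg(1-z)),\]
so on a single generator $\Imag\widehat L$ and $\Vol$ do \emph{not} agree — they differ precisely by $\Imag$ of the ``logarithm-argument'' pairing that also appears in the definition of $\widehat\nu$. This is the key point: the extra terms assemble into (the imaginary part of) the map $\widehat\nu\co\widehat\Pre(\C)\to\C\wedge\C$, suitably interpreted as a real-valued pairing. More precisely, I would define the bilinear form $B\co\C\wedge\C\to\R$ by $B(a\wedge b)=\Imag(\bar a b)$ (or the antisymmetrised real pairing $\tfrac12(\Imag a\,\Real b-\Real a\,\Imag b)$ normalised to match), and show by the computation above that $\Imag\widehat L - \Vol = c\cdot B\circ\widehat\nu$ as functions on $\widehat\Pre(\C)$, for an appropriate constant $c$. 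Since $\widehat\B(\C)=\ker\widehat\nu$, restricting to $\widehat\B(\C)$ kills the correction term and yields $\Imag\widehat L|_{\hatcB(\C)}=\Vol$, which is the claim.

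The steps in order: (1) take imaginary parts of the defining formula for $\widehat L$, using that $p,q\in\Z$ are real; (2) expand $\Imag L(z)$ via \eqref{rogdil} and reconcile it with the stated volume formula, absorbing the elementary difference into argument/log terms; (3) recognise that all the residual terms — both those coming from step (2) and the $\tfrac{\pi}{2}(q\Arg z+p\Arg(1-z))$ piece — are exactly the evaluation of a fixed real bilinear form $B$ on $\widehat\nu[z;p,q]=(\Log z+p\pi i)\wedge(-\Log(1-z)+q\pi i)$; (4) conclude by restricting to $\ker\widehat\nu=\widehat\B(\C)$. The routine but delicate part — and the main obstacle — is step (3): getting the constants and signs so that the residual expression is genuinely $B\circ\widehat\nu$ and not merely ``something in the image of $\widehat\nu$'' would require care, and one must check that the chosen $B$ is well-defined on $\C\wedge\C$ (i.e.\ symmetric inputs $a\wedge a$ go to $0$), which pins down that $B$ must be the antisymmetric pairing. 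One subtlety to watch: $\widehat L$ is only well-defined modulo $2\pi^2\Z$ (equivalently $\tfrac1{2\pi^2}\widehat L$ is $\C/\Z$-valued), but $\Imag\widehat L$ is genuinely real-valued since the ambiguity $2\pi^2\Z$ is real, so taking imaginary parts is legitimate; this should be remarked on. Alternatively — and perhaps more cleanly — one can verify the identity $\Imag\widehat L=\Vol$ on $\widehat\B(\C)$ by noting both sides are functional equations on $\widehat\Pre(\C)$ (for $\Vol$ this is stated in the excerpt, for $\Imag\widehat L$ it follows from the Proposition of Neumann just quoted), and that they agree on generators modulo the image of $\widehat\nu$, then invoking exactness/kernel description; but the direct computation is the most transparent route.
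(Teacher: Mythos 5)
Your overall strategy is exactly the paper's: show that on a single generator $\Vol(z)-\Imag\widehat L([z;p,q])$ is a fixed antisymmetric real pairing evaluated on $\widehat\nu([z;p,q])$, and conclude by restricting to $\ker\widehat\nu=\hatcB(\C)$. (The paper's map $\phi\co\C\wedge\C\to i\R$, projection to $\R\otimes i\R$ followed by multiplication, is your $B$ up to a factor of $i$, and the paper verifies $\phi\big(\widehat\nu([z;p,q])\big)=-2i\big(\Vol(z)-\Imag\widehat L([z;p,q])\big)$.) So the architecture is right and identical to the paper's.

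However, step (1) as displayed is wrong, and carried forward literally it would make step (3) fail. For real $t$ and complex $w$ one has $\Imag(itw)=t\Real(w)$, not $t\Imag(w)$; hence
\[
\Imag\Big(\frac{\pi i}{2}\big(q\Log z-p\Log\tfrac{1}{1-z}\big)\Big)=\frac{\pi}{2}\big(q\Log|z|+p\Log|1-z|\big),
\]
not $\frac{\pi}{2}(q\Arg z+p\Arg(1-z))$, and the ``last correction term'' you discard equals $-\frac{\pi}{2}\Real\big(q\Log z-p\Log\tfrac1{1-z}\big)$, which does not vanish. (Your two displayed pieces sum to $\frac{\pi}{2}\Imag(w)-\frac{\pi}{2}\Real(w)$ rather than $\Imag(\frac{\pi i}{2}w)=\frac{\pi}{2}\Real(w)$, so the decomposition is also internally inconsistent.) The distinction is not cosmetic: in $\widehat\nu([z;p,q])=(\Log z+p\pi i)\wedge(-\Log(1-z)+q\pi i)$ the integers $p,q$ sit in the \emph{imaginary} parts of the two factors, so the antisymmetric pairing $\Real a\,\Imag b-\Imag a\,\Real b$ pairs $p\pi$ against $-\Log|1-z|$ and $q\pi$ against $\Log|z|$, producing precisely the log-modulus terms above. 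With argument terms in their place, the residual $\Vol-\Imag\widehat L$ would not factor through any antisymmetric form applied to $\widehat\nu$, and the proof would stall at your step (3). Once the imaginary part is computed correctly, the rest of your plan goes through and reproduces the paper's calculation.
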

\begin{proof} 
Let $\tau=\sum(-1)^{\epsilon_i}[z_i;p_i,q_i]\in\hatcB(\C)$.
Since \begin{align*}
\Imag \widehat L([z;p,q])=&\frac{1}{2}\big(\Arg(z)\Log\vert 1-z\vert+\Log\vert z\vert \Arg(1-z)\big)\\
&-\Imag\int_0^1\frac{\Log(1-tz)}{t}\,dt+\frac{\pi}{2}p\Log\vert 1-z\vert+
  \frac{\pi}{2}q\Log\vert z\vert\\
\tag*{\hbox{we have}}\Vol(z)-\Imag &\widehat L([z;p,q])=\frac{1}{2}\big(\Log\vert z\vert \Arg(1-z)-\Arg(z)\Log\vert 1-z\vert\big)
\\&\qquad\qquad\qquad-\frac{\pi}{2}p\Log\vert 1-z\vert-\frac{\pi}{2}q\Log\vert z\vert.\end{align*}
Let $\phi$ denote the composition 
\[\C\wedge \C=\left(\R\wedge\R\right)\oplus \left(i\R\wedge i\R\right)\oplus\left(\R\otimes i\R\right) \to \R\otimes i\R\to i\R\] 
where the left map is projection
and the right map is multiplication.
A simple calculation shows that 
\begin{multline*}\phi\big(\widehat\nu([z;p,q])\big)=-i\Log\vert z\vert \Arg(1-z)+i\Arg(z)\Log\vert 1-z\vert\\
+p\pi i\Log\vert 1-z\vert+q\pi i\Log\vert z\vert=-2i\big(\Vol(z)-\Imag \widehat L([z;p,q])\big).\end{multline*}
Since $\widehat\nu(\tau)=0$, we have $\Vol(\tau)=\Imag\widehat L(\tau)$ as desired.
\end{proof}
\subsection{The real part} 
Let $G_\R=\SL(2,\R)$.
The key step is the following theorem of Dupont, Parry and Sah \cite{DupontParrySah,Sah}.
\begin{thm}
The inclusion $G_\R\to G$ induces an isomorphism
\[H_3(G_\R)\cong H_3(G)_+.\]
\end{thm}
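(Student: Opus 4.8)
The plan is to prove the statement $H_3(G_\R) \cong H_3(G)_+$ by combining the known rational structure of $H_3(\SL(2,\R))$ and $H_3(\SL(2,\C))$ with a direct comparison via the Lie algebra. The result is due to Dupont--Parry--Sah and Sah, and it splits naturally into a torsion statement and a torsion-free statement, since by the result of Dupont--Sah cited above, $H_3(G)$ is divisible. First I would recall that $H_3(\SL(2,\R))$ is an extension of the (torsion-free, in fact $\Q$--vector space) Bloch group $\B(\R)$ by $\Q/\Z$, coming from the Bloch--Wigner--type sequence, and that complex conjugation $\tau$ acts trivially on the $\Q/\Z$ summand (the rotations defining those classes are defined over $\R$). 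Thus on the torsion part the inclusion $G_\R \hookrightarrow G$ already induces an isomorphism onto the $\tau$--fixed torsion.

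The heart of the argument is the torsion-free part: one must show that the induced map $\B(\R) \to \B(\C)_+$ is an isomorphism of $\Q$--vector spaces. The key is that $H_3$ of $\SL(2,-)$ of a field, after tensoring with $\Q$, is computed by the Bloch group (more precisely, $H_3(\SL(2,F);\Q) \cong \mathcal B(F)\otimes\Q \oplus (\text{something involving } H_3 \text{ of the ground field})$, which vanishes here). For the imaginary part, I would use the volume/regulator: the map $\Vol\circ\lambda$ is surjective onto $\R$ with the imaginary part of $H_3(G)$ mapping isomorphically, while $\Vol$ vanishes identically on $\lambda(H_3(G_\R))$ since all cross-ratios arising from $\SL(2,\R)$--configurations are real and flat simplices have zero volume. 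This shows $\B(\R)$ lands in the $+1$--eigenspace. To get that it is all of it, one compares dimensions: the $\tau$--action on $\B(\C)$ decomposes it into $\B(\C)_+ \oplus \B(\C)_-$, and a transfer/averaging argument (using that we work over $\Q$, so $\frac{1}{2}(1+\tau)$ is available) identifies $\B(\C)_+$ with the image of the averaging projector. One then checks that the five-term relation over $\R$ together with the relations $[z]=[\bar z]$ forced on the $+$--part exactly cut out the real Bloch group — this is essentially a presentation-matching argument.

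I expect the main obstacle to be the surjectivity of $\B(\R) \to \B(\C)_+$, i.e.\ showing that every $\tau$--invariant element of $\B(\C)$ is (rationally) represented by a combination of real cross-ratios. The naive guess — that a $\tau$--invariant sum $\sum n_i[z_i]$ can be rewritten using $[z]=[\bar z]$ to involve only real arguments — is false on the nose, and one genuinely needs the interpretation of $\B(\C)\otimes\Q$ as an algebraic $K$--theory group (Suslin's theorem $K_3^{\mathrm{ind}}(\C)\otimes\Q \cong \B(\C)\otimes\Q$ and its analogue for $\R$) together with the fact that $K_3^{\mathrm{ind}}(\R)\otimes\Q \to K_3^{\mathrm{ind}}(\C)\otimes\Q$ is injective with image the fixed points of complex conjugation. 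The latter is a statement about the Borel regulator and the Galois action on $K$--theory, and that is where the real content lies; the simplicial/Bloch-group side is then just translating this back through the isomorphisms. Since the present paper only needs this as a black box — it is attributed to \cite{DupontParrySah,Sah} — I would present the proof at the level of: reduce to the torsion-free part by divisibility, handle the torsion by the explicit rotation classes, and cite the $K$--theoretic input for the rest, sketching the volume argument as the concrete obstruction to anything lying outside $\B(\C)_+$.
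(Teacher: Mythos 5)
The paper does not prove this theorem at all: it is imported as a black box from Dupont--Parry--Sah and Sah, so there is no in-paper argument to compare yours against. Your decision to reduce to the torsion and torsion-free parts and then cite the $K$--theoretic input is therefore consistent with how the result is actually used here, and you correctly locate where the real content lies (Galois descent for $K_3^{\mathrm{ind}}\otimes\Q$ via the transfer, plus Suslin-type identifications of $\B(F)\otimes\Q$ with $K_3^{\mathrm{ind}}(F)\otimes\Q$). But as a proof your sketch has genuine gaps beyond the one you flag. First, the step you wave at as ``presentation-matching'' is, as you yourself note, false on the nose, and the replacement you propose --- passing from $H_3(\SL(2,F))$ to $K_3^{\mathrm{ind}}(F)$ --- is itself a hard theorem for $\SL_2$: the transfer/averaging argument you invoke lives on $\GL$ (via restriction of scalars $\GL_n(\C)\to\GL_{2n}(\R)$) and does not descend to $\SL_2$ by any formal argument; bridging $H_3(\SL(2,-))$ and stable $K$--theory is precisely the content of the cited papers, so your outline is circular at exactly the point where work is needed.

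Second, two of your intermediate claims are wrong or misdirected. The containment $i_*(H_3(G_\R))\subseteq H_3(G)_+$ needs no volume argument and is not proved by one: complex conjugation restricts to the identity on $\SL(2,\R)$, so $\tau\circ i_*=i_*$ by functoriality, and that is the whole proof of containment. Conversely, the vanishing of $\Vol$ on $\lambda(H_3(G_\R))$ says nothing about membership in the $+1$--eigenspace, since $\Vol$ has an enormous kernel. Your assertion that ``the imaginary part of $H_3(G)$ map[s] isomorphically'' onto $\R$ under $\Vol\circ\lambda$ is false: $H_3(G)_-$ modulo torsion is an uncountable-dimensional $\Q$--vector space and the Borel-type regulator $\Vol$ is very far from injective on it; at best $\Vol$ witnesses that $H_3(G)_-$ is nonzero. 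So the honest summary is: your roadmap matches the literature's architecture, but the imaginary-part discussion should be deleted in favor of the one-line functoriality argument, and the surjectivity of $H_3(G_\R)\to H_3(G)_+$ genuinely requires the $\SL_2$--specific analysis of Parry--Sah and Dupont--Parry--Sah, not just the $\GL$--transfer.
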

This means that it is enough to study real cycles. The idea is that
every homology class in $H_3(G_\R)$ has a representative such that the
image of $\widehat L\circ \widehat \lambda$ is the same as the image
of the cocycle $L$ from \eqref{dupontchain}. 

In the following the reader should bear in mind the relationship
between the homogenous and the inhomogenous representations of cycles given by the
equations \eqref{inhom} and \eqref{hom}. 

\begin{defn} An element $\big(\begin{smallmatrix}a&b\\c&d\end{smallmatrix}\big)\in G_\R$ is called \emph{positive\/} if $c$ is positive and \emph{nonzero\/} if $c$ is nonzero. A chain in $B_n(G_\R)$ is called \emph{positive\/} if all its group elements are positive.\end{defn}

If $(g_1,g_2,g_3)$ is a triple of positive elements that are so small
(close to the identity) that also $g_1g_2$, $g_2g_3$ and $g_1g_2g_3$
are positive, we have
\begin{itemize}
\item
  $(v_0,v_1,v_2,v_3):=\big(\binom{1}{0},g_1\binom{1}{0},g_1g_2\binom{1}{0},g_1g_2g_3\binom{1}{0}\big)$ is in $C_3^{h\neq}(\C^2)$.
\item $\det(v_i,v_j)>0$ for $i<j$.
\item $\infty>g_1\infty>g_1g_2\infty>g_1g_2g_3\infty$.
\end{itemize} 
The third property ensures that the
cross-ratio $z$ of the associated flat ideal simplex is strictly between
$0$ and $1$, and the second property ensures that the log-parameters $w_0,w_1,w_2$ satisfy that
$l(w_0,w_1,w_2)=(z;0,0)$.  This means that if $\alpha$ is an inhomogenous
representation of a class in $H_3(G_\R)$ with all group elements
sufficiently small and positive then 
\begin{equation}\label{small} \frac{1}{2\pi^2}\widehat L\circ\widehat\lambda(\alpha)=\frac{1}{2\pi^2}L(\alpha).
\end{equation}
As we shall see below, every homology class in $H_3(G_\R)$ has such a representative.

The following is essentially just an application of barycentric subdivision, and we refer to \cite[Proposition 2.8]{Dupont} for a proof.
\begin{lemma}\label{dupontlemma} Let $H$ be a contractible Lie group and $U$ a neighborhood of the identity. Every cycle in $B_*(H)$ is homologous to a cycle consisting of elements in $U$.
\end{lemma}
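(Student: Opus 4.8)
The plan is to follow the barycentric subdivision argument of \cite[Proposition 2.8]{Dupont}, the one new ingredient being a way to take barycenters globally on $H$. First I would exploit contractibility: a contractible Lie group is connected with trivial maximal compact subgroup, hence diffeomorphic to a Euclidean space; fix such a diffeomorphism $\psi\co H\to\R^N$ with $\psi(e)=0$. This lets one average any finite tuple of points of $H$. Working in the homogeneous complex $C_*(H)$, and recalling $B_*(H)=C_*(H)_H$, I would then assign to each simplex $(g_0,\dots,g_n)$ the ``barycenter'' $b(g_0,\dots,g_n):=g_0\cdot\psi^{-1}\bigl(\tfrac1{n+1}\sum_{i}\psi(g_0^{-1}g_i)\bigr)$. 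The point of the twist by $g_0$ is that $b$ is then \emph{$H$--equivariant}, $b(gg_0,\dots,gg_n)=g\cdot b(g_0,\dots,g_n)$, so that the classical cone formulas built from these barycenters of all faces produce an equivariant subdivision chain map and an equivariant cone homotopy on $C_*(H)$, which therefore descend to a chain map $\mathrm{sd}\co B_*(H)\to B_*(H)$ that is chain homotopic to the identity. In particular, for a cycle $\alpha$ in $B_*(H)$ and any $m\ge0$, the iterate $\mathrm{sd}^m(\alpha)$ is a cycle homologous to $\alpha$.

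It then remains to choose $m$ so that every symbol $[h_1\vert\cdots\vert h_n]$ occurring in $\mathrm{sd}^m(\alpha)$ has all entries in $U$. Passing to homogeneous coordinates via \eqref{inhom}, this is the statement that every sub-simplex $(h_0,\dots,h_n)$ produced by $\mathrm{sd}^m$ from a tuple of $\alpha$ satisfies $h_{j-1}^{-1}h_j\in U$ for all $j$; by \eqref{hom} this is exactly what we want, and it holds as soon as the tuple has small diameter for a fixed left--invariant metric on $H$ (then $h_{j-1}^{-1}h_j$ is close to $e$). Since $\alpha$ is a \emph{finite} chain, only finitely many tuples, and hence a compact region of $H$, are involved; on that compact region $\psi$, $\psi^{-1}$ and the group multiplication are uniformly continuous, so one would deduce the required smallness from the behaviour of the iterated subdivision in the $\psi$--coordinates, where near the diagonal $b$ is approximated by an \emph{affine} barycenter, for which diameters of sub-simplices decay geometrically (by a factor $\tfrac n{n+1}$ at each step).

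The main obstacle is precisely this last point. Ordinary affine barycentric subdivision in $\R^N$ shrinks every simplex geometrically regardless of size, but the \emph{equivariant} barycenter $b$ above is only approximately affine near the diagonal, so one cannot simply invoke the Euclidean estimate for the first few subdivisions of a large tuple. One must argue, using a compactness/uniform-continuity argument as above (and here the global diffeomorphism $\psi$, i.e.\ the hypothesis that $H$ is contractible, is really being used), that after sufficiently many subdivisions every sub-simplex of the finitely many tuples of $\alpha$ enters the regime in which the geometric decay applies, and thereafter becomes arbitrarily small. This bookkeeping is carried out in \cite[Proposition 2.8]{Dupont}, to which we refer for the details.
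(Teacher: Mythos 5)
Your overall strategy (equivariant barycentric subdivision) is the one the paper gestures at, but there is a genuine gap exactly where you flag one, and I do not think it can be waved away by deferring to \cite{Dupont}. The problem is that your twisted barycenter $b(g_0,\dots,g_n)=g_0\cdot\psi^{-1}\bigl(\tfrac1{n+1}\sum_i\psi(g_0^{-1}g_i)\bigr)$ is controlled by the affine one only near the diagonal. For a simplex that is not already small, a single subdivision step can \emph{increase} diameters: already for an edge in $H\cong\R^2$, the sub-edge differences are $F(v)=\psi^{-1}\bigl(\tfrac12\psi(v)\bigr)$ and $v-F(v)$, and for a badly distorting diffeomorphism $\psi$ the point $F(v)$ need not lie anywhere near the segment $[0,v]$, so $|v-F(v)|$ can exceed $|v|$. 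Iterating, the edge differences after $m$ steps are arbitrary words in $F$ and $G=\mathrm{id}-F$ applied to the original differences, and nothing in your argument rules out bounded orbits that never enter the near-diagonal regime where the $\tfrac n{n+1}$ decay kicks in. So the statement ``after sufficiently many subdivisions every sub-simplex enters the regime in which the geometric decay applies'' is precisely what needs proof, and compactness plus uniform continuity alone do not give it. (One could try to repair this by choosing $\psi$ with special convexity properties, but that is an extra, nontrivial input.)

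The standard way to organize the argument — and, as far as we can tell, the way \cite{Dupont} does it — sidesteps this entirely by subdividing the \emph{domain} of the cycle rather than the group. A cycle $\alpha\in B_k(H)$ is realized by a finite $\Delta$--complex $X$ together with a homomorphism $\rho\co\pi_1(X)\to H$ and a $\rho$--equivariant labelling of the vertices of $\widetilde X$ by elements of $H$. Because $H$ is contractible (all $\pi_i(H)$ vanish) and $\pi_1(X)$ acts freely on $\widetilde X$, this labelling extends cell by cell to a $\rho$--equivariant continuous map $F\co|\widetilde X|\to H$. Now barycentrically subdivide $X$ \emph{affinely} in its own simplices — where the classical $\tfrac n{n+1}$ estimate holds with no caveats — and label the new vertices by their $F$--values. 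The resulting cycle is homologous to $\alpha$ (via the labelled prism homotopy), and equivariance plus compactness of $|X|$ gives uniform continuity of the ``shape'' map $(v,w)\mapsto F(v)^{-1}F(w)$, so after enough subdivisions all group elements land in $U$. Note that this version uses contractibility obstruction-theoretically (to extend the labelling), not through a global chart $\psi\co H\to\R^N$; your proof should be restructured along these lines, or else the shrinking claim for the twisted barycenter must be proved honestly.
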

Let $\widetilde{G_\R}$ be the universal covering group of $G_\R$. 
Parry and Sah \cite{ParrySah} analyse the Hochshild--Serre spectral sequence for the exact sequence \[0\to \Z\to \widetilde{G_\R}\to G_\R\to 0\]
and obtain:
\begin{prop}\label{parrysah} $H_3(\widetilde {G_\R})\to H_3(G_\R)$ is surjective.
\end{prop}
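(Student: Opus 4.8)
The plan is to run the Lyndon--Hochschild--Serre spectral sequence of the central extension $0\to\Z\to\widetilde{G_\R}\to G_\R\to 0$, following Parry and Sah \cite{ParrySah}. First I would note that, since $\Z$ is central in $\widetilde{G_\R}$, it acts trivially on its own homology, and $H_q(\Z;\Z)$ equals $\Z$ for $q\in\{0,1\}$ and vanishes for $q\ge 2$. Hence the $E^2$--page $E^2_{p,q}=H_p(G_\R;H_q(\Z))$ is $H_p(G_\R)$ in the two rows $q=0$ and $q=1$ and is zero otherwise.

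A first-quadrant homology spectral sequence supported in the rows $q=0,1$ collapses at $E^3$, and the usual bookkeeping converts it into a long exact sequence of Gysin type
\[\cdots\to H_3(\widetilde{G_\R})\to H_3(G_\R)\xrightarrow{d^2}H_1(G_\R)\to H_2(\widetilde{G_\R})\to\cdots,\]
in which the map $H_n(\widetilde{G_\R})\to H_n(G_\R)$ is the bottom-row edge homomorphism, that is, the map induced by the covering projection. Consequently the image of $H_3(\widetilde{G_\R})\to H_3(G_\R)$ is exactly $\ker\!\big(d^2\co H_3(G_\R)\to H_1(G_\R)\big)$, so surjectivity is equivalent to the vanishing of this single differential $d^2$.

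That vanishing is immediate once one recalls that $\SL(2,\R)$, regarded as an abstract group, is perfect---its commutator subgroup is all of it, as is standard for $\SL_2$ over an infinite field---so that $H_1(G_\R)=H_1(\SL(2,\R);\Z)=0$. Then $d^2$ is forced to be zero, $\ker(d^2)=H_3(G_\R)$, and $H_3(\widetilde{G_\R})\to H_3(G_\R)$ is onto. The only genuinely non-formal input here is the perfectness of $\SL(2,\R)$; everything else is spectral-sequence formalism, the one point deserving a moment's care being the standard identification of the bottom-row edge map with the projection-induced map.
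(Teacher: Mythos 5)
Your argument is correct and is exactly the approach the paper points to: the paper gives no proof of its own but attributes the statement to Parry and Sah's analysis of the Hochschild--Serre spectral sequence of the central extension $0\to\Z\to\widetilde{G_\R}\to G_\R\to 0$, which is precisely the two-row spectral sequence computation you carry out. Your reduction to the vanishing of $d^2\co H_3(G_\R)\to H_1(G_\R)$ and the appeal to the perfectness of $\SL(2,\R)$ correctly supply the details the paper leaves to the reference.
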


Since $G_\R$ is homotopy equivalent to a circle, $\widetilde{G_\R}$ is
contractible, and by \mbox{\fullref{dupontlemma}} and \fullref{parrysah}, every homology class in $H_3(G_\R)$ has an
inhomogenous representative with all group elements arbitrarily
small. \\
Let $U$ be an open neighborhood of the identity in $G_\R$ satisfying
that any product of up to three positive elements is positive.

We now show that every sufficiently small cycle in $B_3(G_\R)$ is
homologous to a positive cycle with all elements in $U$. This implies that every homology class has a representative satisfying \eqref{small}.\\
 
Define an ordering of elements in $G_\R$ by
\[g_1<g_2 \iff g_1^{-1}g_2 \textnormal{ is positive.}\]
This ordering is neither total nor transitive, but as we shall see,
this can be fixed.
The following is simple.
\begin{lemma}\label{un}
For every natural number $n$ there exists an open subset $U_n$ of $U$ satisfying that $g\in U_n$ if and only if
  $g^{-1}\in U_n$ and that any product of up to $n$ positive elements in
  $U_n$ is a positive element in $U$. 
\end{lemma}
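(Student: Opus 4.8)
The plan is to produce $U_n$ by shrinking $U$ twice and then symmetrizing. First I would choose an open neighbourhood $W_0\subseteq U$ of the identity $e$ small enough that every $g=\big(\begin{smallmatrix}a&b\\c&d\end{smallmatrix}\big)\in W_0$ has $a>0$ and $d>0$; this is possible because $a=d=1$ at $e$ and the matrix entries depend continuously on $g$. Next I would use continuity of the $n$--fold multiplication map $G_\R^n\to G_\R$ at $(e,\dots,e)$ to find an open neighbourhood $V_1\subseteq W_0$ of $e$ with $V_1^n\subseteq W_0$; since $e\in V_1$ this forces $V_1^k\subseteq W_0$ for every $k\le n$ as well, so in particular every partial product of at most $n$ elements of $V_1$ lies in $W_0\subseteq U$.

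The key point is that positivity propagates through such partial products. Writing $a(\cdot),c(\cdot),d(\cdot)$ for the matrix entries, the lower--left entry of a product of matrices satisfies $c(AB)=c(A)\,a(B)+d(A)\,c(B)$. Now suppose $g_1,\dots,g_k\in V_1$ (with $k\le n$) are all positive and set $P_j=g_1\cdots g_j$. Then $c(P_1)=c(g_1)>0$, and inductively, assuming $c(P_{j-1})>0$, the matrix $P_{j-1}$ lies in $W_0$ so $a(P_{j-1}),d(P_{j-1})>0$, while $g_j\in W_0$ gives $a(g_j)>0$ and $c(g_j)>0$ by positivity; hence $c(P_j)=c(P_{j-1})a(g_j)+d(P_{j-1})c(g_j)>0$. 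So $P_k$ is positive, and we already know $P_k\in W_0\subseteq U$; thus every product of up to $n$ positive elements of $V_1$ is a positive element of $U$.

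Finally I would set $U_n=V_1\cap V_1^{-1}$, where $V_1^{-1}=\{g^{-1}\mid g\in V_1\}$; this set is open because inversion is a homeomorphism of $G_\R$, it contains $e$, and it satisfies $g\in U_n\iff g^{-1}\in U_n$ by construction. Since $U_n\subseteq V_1$, it inherits the product property established above, which completes the proof. The only mild subtlety — and the one reason the two--step shrinking is needed rather than a single neighbourhood — is that the sign of $c(P_j)$ is only controlled once one knows $a(P_{j-1})$ and $d(P_{j-1})$ are positive, which is precisely what forces the partial products to remain inside $W_0$.
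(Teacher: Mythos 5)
Your proof is correct, and since the paper dismisses this lemma as ``simple'' without giving an argument, your write-up is exactly the kind of routine verification that was intended: shrink $U$ so that the diagonal entries stay positive, use continuity of multiplication to keep all partial products in that neighbourhood, propagate positivity of the lower-left entry via $c(AB)=c(A)a(B)+d(A)c(B)$, and symmetrize by intersecting with the image under inversion. No gaps.
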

Fix neighborhoods $U_n$ as above. We may assume that
$U_n\subset U_{n-1}$ and that the product of any two elements in $U_n$
is in $U_{n-1}$.
\begin{defn} Let $k\leq n$. A $k$--chain in $B_k(G_\R)$ is called a 
$U_n$--$k$--chain if it is nonzero and all its group elements lie in $U_n$. 
A $k$--chain in $C_k(G_\R)$ is called
a $U_n$--$k$--chain if it maps to a $U_n$--$k$--chain in $B_k(G_\R)$. The set of $U_n$--$k$--chains in $C_k(G_\R)$ is denoted $C_k(G_\R)_{U_n}$. 
\end{defn}
\begin{prop}\label{bubblesort}
Let $g_0,\dots ,g_n\in G_\R$ satisfy that all elements $g_{i-1}^{-1}g_i$ are in $U_n$ and nonzero.
There exists a \emph{unique\/} permutation $\sigma\in S_{n+1}$ such that 
$g_{\sigma(0)}<\dots<g_{\sigma(n)}$.
\end{prop}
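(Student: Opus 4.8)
The plan is to prove Proposition~\ref{bubblesort} by exhibiting the ordering explicitly and then verifying that it is transitive on the given elements, so that the usual argument that a transitive total (pre)order determines a unique sorting permutation applies.

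First I would observe that the hypothesis gives us, for every pair $i<j$, that $g_i^{-1}g_j = (g_i^{-1}g_{i+1})(g_{i+1}^{-1}g_{i+2})\cdots(g_{j-1}^{-1}g_j)$ is a product of at most $n$ positive elements of $U_n$, hence positive in $U$ by \fullref{un}; in particular each such $g_i^{-1}g_j$ is nonzero, so for any two \emph{distinct} indices $i,j$ exactly one of $g_i<g_j$, $g_j<g_i$ holds (since $c$ and $-c$ cannot both be positive, and the inverse of a positive element has lower-left entry of opposite sign). This already shows $g_i<g_j$ whenever $i<j$, so the identity permutation works, except that the $g_i$ themselves need not be distinct as abstract elements --- wait, they are forced distinct here, since $g_i^{-1}g_j$ nonzero with $c\neq 0$ is certainly not the identity. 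So in fact the statement reduces to: the relation $<$ restricted to the set $\{g_0,\dots,g_n\}$ is a strict total order, and the unique $\sigma$ is characterised by $g_{\sigma(0)}<\cdots<g_{\sigma(n)}$.

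The key step is \textbf{transitivity on this particular set}. Suppose $a<b$ and $b<c$ with $a,b,c\in\{g_0,\dots,g_n\}$; I want $a<c$. Since $<$ is already known to be trichotomous on the set, it suffices to rule out $c<a$ and $c=a$ (the latter being impossible as above). If $c<a$ then $a^{-1}b$, $b^{-1}c$, $c^{-1}a$ are all positive, and I would like a contradiction. Here is where one must use more than the abstract sign function: I would translate positivity of $\binom{a\ b}{c\ d}$ into the statement that the M\"obius transformation moves $\infty$ to a point with a definite sign of imaginary part --- actually, for elements close to the identity in $\SL(2,\R)$, positivity of $c$ is equivalent to the geometric condition $g^{-1}\infty < \infty$ on the real line (using $g^{-1}\binom{1}{0}=\binom{d}{-c}$, whose Hopf image is $-d/c$, and $d>0$ near the identity), and more generally $g_i < g_j$ is equivalent to $g_j\infty < g_i\infty$ as real numbers (this is exactly the third bulleted property used in deriving \eqref{small}). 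Transitivity of $<$ on the set then follows from transitivity of the genuine total order $<$ on $\R$, provided all the points $g_i\infty$ are finite and distinct --- which they are, because $g_i^{-1}g_j$ nonzero means $g_i\infty\neq g_j\infty$, and shrinking $U$ keeps all $g_i\infty$ in a bounded region of $\R$ away from $\infty$.

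\textbf{The main obstacle} I anticipate is making the translation ``$g_i<g_j \iff g_j\infty<g_i\infty$'' rigorous and uniform: the equivalence of the algebraic positivity condition with the geometric order of images of $\infty$ is only valid in a small enough neighborhood of the identity, so one has to be careful that the $U_n$ (and the original $U$) were chosen small enough that every $g_i^{-1}g_j$ appearing --- a product of up to $n$ positive elements --- still lies in a region where this equivalence holds, and simultaneously that all the points $g_i\infty$ avoid $\infty$. Once that bookkeeping is done, existence of $\sigma$ is immediate from the well-ordering of the finitely many real numbers $g_0\infty,\dots,g_n\infty$ (take $\sigma$ so that $g_{\sigma(0)}\infty>\cdots>g_{\sigma(n)}\infty$), and uniqueness follows because a strict total order admits only one increasing enumeration of a finite set.
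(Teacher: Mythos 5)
Your first paragraph rests on a misreading of the hypothesis: the consecutive quotients $g_{i-1}^{-1}g_i$ are assumed to be \emph{nonzero} elements of $U_n$, not \emph{positive} ones, so $g_i^{-1}g_j$ (for $i<j$) is a product of at most $n$ nonzero elements of $U_n$ and there is no reason for it to be positive. Consequently the claim that $g_i<g_j$ whenever $i<j$, so that the identity permutation works, is false; the entire point of the proposition, and of the maps $\Psi_k$ built from it, is that the given ordering generally must be permuted before it becomes positive. What the hypothesis does give, via \fullref{un} and the nesting of the $U_n$'s, is that every pairwise quotient $g_i^{-1}g_j$ lies in $U$; combined with the inverse formula $\big(\begin{smallmatrix}a&b\\c&d\end{smallmatrix}\big)^{-1}=\big(\begin{smallmatrix}d&-b\\-c&a\end{smallmatrix}\big)$ this yields that for $i\neq j$ exactly one of $g_i<g_j$, $g_j<g_i$ holds, which is the totality you actually need.

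The second, more serious, gap is in your transitivity argument. The equivalence ``$g_i<g_j\iff g_j\infty<g_i\infty$'' is not valid in general: writing $g_i=\big(\begin{smallmatrix}a&b\\c&d\end{smallmatrix}\big)$ and $g_j=\big(\begin{smallmatrix}a'&b'\\c'&d'\end{smallmatrix}\big)$, so that $g_i\infty=a/c$ and $g_j\infty=a'/c'$, the lower-left entry of $g_i^{-1}g_j$ is $ac'-a'c$; the equivalence holds when $c,c'>0$, \emph{reverses} when $c$ and $c'$ have opposite signs, and is meaningless when one of them vanishes ($g_i\infty=\infty$). You cannot repair this by shrinking $U$, because only the consecutive quotients are constrained to lie near the identity --- the $g_i$ themselves are arbitrary elements of $G_\R$ (in the homogeneous picture $g_0$ may well be the identity, with $g_0\infty=\infty$), so the points $g_i\infty$ need not lie in a bounded interval of $\R$, and their true structure is only a cyclic order on the circle $\R\cup\{\infty\}$, from which transitivity does not follow. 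The correct and much shorter route, which is the one the paper takes, is purely algebraic and uses exactly the property of $U$ that you never invoke, namely that a product of up to three positive elements is positive: if $a<b$ and $b<c$, then $a^{-1}b$ and $b^{-1}c$ are positive elements of $U$, hence $a^{-1}c=(a^{-1}b)(b^{-1}c)$ is positive, that is, $a<c$. Totality together with transitivity then gives existence and uniqueness of $\sigma$ by sorting (the paper runs bubble sort); your final sorting step is fine once the order-theoretic facts are established correctly.
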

\begin{proof}
The assumption on the $g_i$'s implies that the restriction of the ordering to $\{g_0,\dots,g_n\}$ is transitive, and since $\big(\begin{smallmatrix}a&b\\c&d\end{smallmatrix}\big)^{\smash{-1}}=\big(\begin{smallmatrix}d&-b\\-c&a\end{smallmatrix}\big)$, we have either $g_i<g_j$ or $g_i>g_j$.
This means that we can use the bubble sort algorithm to produce the desired permutation.
\end{proof} 
We thereby obtain $G_{\R}$--maps
\[\Psi_k\co  C_k(G_{\R})_{U_n}\to C_k(G_{\R}).\]
Note that the image of $\Psi_k$ consists of chains whose images
in $B_k(G_\R)$ consist entirely of positive elements in $U$. Note also
that the boundary map takes $C_k(G_\R)_{U_n}$ to $C_{k-1}(G_\R)_{U_{n-1}}$.
\begin{prop}\label{homcycle}
Let $\tau\in C_k(G_{\R})_{U_n}$, $k\leq n$, represent a cycle in
$B_k(G_{\R})$. Then $\Psi_k(\tau)$ and $\tau$ represent homologous
cycles in $B_k(G_{\R})$.
\end{prop}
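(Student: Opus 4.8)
The plan is to construct an explicit chain homotopy between $\Psi_k$ and the identity on the subcomplex of $U_n$–chains, by a standard ``prism'' or ``interpolation'' argument adapted to the combinatorial ordering produced by \fullref{bubblesort}. The point is that $\Psi_k$ is not literally a chain map on the nose (the permutations chosen for different faces need not be compatible), so one cannot just invoke functoriality; instead one builds the homotopy by hand using the inhomogenous/homogenous dictionary \eqref{inhom}–\eqref{hom}.

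First I would work in the homogenous picture. A $U_n$–$k$–chain is a sum of tuples $(g_0,\dots,g_k)$ with every $g_{i-1}^{-1}g_i\in U_n$ and nonzero, and $\Psi_k$ replaces such a tuple by $(g_{\sigma(0)},\dots,g_{\sigma(k)})$ for the unique sorting permutation $\sigma$ of \fullref{bubblesort}. The key observation is that for a tuple $(g_0,\dots,g_k)$ as above, \emph{every} subtuple (in any order) again has all consecutive quotients in $U$ and nonzero — this follows from the assumptions $U_n\subset U_{n-1}$, closure of products of two $U_n$–elements into $U_{n-1}$, and \fullref{un} — so the whole simplex spanned by $\{g_0,\dots,g_k\}$ together with any reordering of its vertices lives inside the ``admissible'' region. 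Hence I can define a homotopy operator on a bigger complex whose $m$–simplices are $(m+1)$–tuples (with repetitions allowed) of points in $\{$products of up to $n$ positive $U_n$–elements$\}$, and compare the ``identity'' tuple with the ``sorted'' tuple by the usual formula for the chain homotopy between two simplicial maps that agree on vertices:
\begin{equation*}
h(g_0,\dots,g_k)=\sum_{i=0}^{k}(-1)^i\bigl(g_0,\dots,g_i,g_{\sigma(i)},g_{\sigma(i+1)},\dots,g_{\sigma(k)}\bigr),
\end{equation*}
which satisfies $\partial h + h\partial = \Psi_k - \id$ on the level of chains in $C_*(G_\R)$, provided all the intermediate tuples appearing are themselves admissible (which is exactly what the remarks above guarantee). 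Passing to coinvariants $(-)_{G_\R}$ and then translating to $B_*(G_\R)$ via \eqref{hom} shows that $\Psi_k(\tau)$ and $\tau$ differ by a boundary, hence represent homologous cycles.

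The main obstacle — and the place where one must be careful rather than formulaic — is bookkeeping the admissibility condition: one has to check that every tuple $(g_0,\dots,g_i,g_{\sigma(i)},\dots,g_{\sigma(k)})$ occurring in $h$ has all its consecutive quotients nonzero and in $U$, so that $h$ genuinely lands in $C_*(G_\R)$ (not just in $C_*$ of an abstract set) and so that $\Psi$ is even defined on the faces of $h(\tau)$. This is where the nested neighborhoods $U_n\subset U_{n-1}\subset\cdots$, the symmetry $g\in U_n\Leftrightarrow g^{-1}\in U_n$, and the ``product of two $U_n$–elements lies in $U_{n-1}$'' hypotheses are all used; since $\tau$ is a $k$–chain with $k\le n$, one loses at most $k$ levels of ``$U_n$–ness'' along the way and stays inside $U$, which is all that is needed. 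Once admissibility is in hand, the identity $\partial h + h\partial = \Psi_k-\id$ is the standard verification for the prism operator and I would leave it to the reader, noting only that the sign conventions match those of the bar differential after applying \eqref{hom}.
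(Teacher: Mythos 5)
Your overall strategy (exhibit a chain homotopy between $\Psi_*$ and the identity) is the right one, but both of the claims on which your execution rests are incorrect. First, $\Psi_k$ \emph{is} a chain map in degrees $\leq n$ --- this is exactly what the uniqueness in \fullref{bubblesort} buys, and it is the whole point of the paper's proof. If $(g_0,\dots,g_k)$ is a $U_n$--tuple with sorting permutation $\sigma$, each face is a $U_{n-1}$--tuple whose unique sorting permutation is the restriction of $\sigma$, so the sorted face equals the corresponding face of the sorted tuple; with the sign $\mathrm{sgn}(\sigma)$ built into $\Psi_k$ the identity $\partial\Psi_k=\Psi_{k-1}\partial$ is the usual ordered--versus--alternating chains computation. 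So your stated reason for abandoning the ``functorial'' route is mistaken. Second, and more seriously, your explicit prism operator does not satisfy $\partial h+h\partial=\Psi_k-\id$. The formula $\sum_i(-1)^i(fv_0,\dots,fv_i,gv_i,\dots,gv_k)$ is a homotopy between chain maps induced by \emph{vertex} maps $f,g$; ``sort'' is not induced by a vertex map, since $g_{\sigma(i)}$ depends on the entire tuple and is not the image of $g_i$ under anything. Already for $k=1$ with $g_1<g_0$ (so $\sigma$ is the transposition) your formula gives $h(g_0,g_1)=(g_0,g_1,g_0)-(g_0,g_1,g_0)=0$, whence
\begin{equation*}
\partial h(g_0,g_1)+h\partial(g_0,g_1)=(g_1,g_1)-(g_0,g_0),
\end{equation*}
which is not $\Psi_1(g_0,g_1)-(g_0,g_1)=\pm(g_1,g_0)-(g_0,g_1)$.

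The intended argument is: once $\Psi_*$ is known to be a chain map in degrees $\leq n$ agreeing with the identity in degree $0$, the homotopy $S_k$ is produced by the standard inductive (acyclic models) construction --- $C_k(G_\R)_{U_n}$ is a free $\Z[G_\R]$--module and $C_*(G_\R)$ is acyclic, so $\Psi_k-\id-S_{k-1}\partial$, being a cycle, is a boundary $\partial S_k$, with $S_k$ chosen equivariantly on orbit representatives of generators. Note also that your admissibility bookkeeping is aimed at the wrong target: the homotopy only needs to land in $C_{k+1}(G_\R)$, where \emph{every} tuple of group elements is allowed, so no positivity or nonvanishing condition on intermediate tuples needs checking; the nested neighborhoods $U_n\subset U_{n-1}$ are needed only to ensure that $\partial$ carries $C_k(G_\R)_{U_n}$ into $C_{k-1}(G_\R)_{U_{n-1}}$, i.e.\ that $\Psi$ is defined on faces at all.
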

\begin{proof} By the uniqueness in \fullref{bubblesort}, the
  maps $\Psi_k$ give rise to a chain map in dimensions up to $n$. By a
  standard argument, there exist $G_{\R}$--maps $S_k\co  C_k(G_\R)_{U_n}\to C_{k+1}(G_\R)$, $k=0,\dots ,n$, such that
\[\partial S_k+S_{k-1}\partial = \Psi_k-\text{id}.\]
This proves the assertion.
\end{proof}

\begin{proof}[Proof of \fullref{hovedsats}] 
By \fullref{homcycle} and \fullref{dupontlemma} we have that
every homology class in $H_3(G_{\R})$ has an inhomogenous
representative satisfying  \eqref{small}. Recall from diagram \eqref{PC} that the restriction of $\hat C_2$ to $H_3(G_{\R})$ equals $-\hat P_1$. 
By\break equation \eqref{small}, \fullref{neuim}, \fullref{dupontim} and \fullref{CCS}, we have that\break
$-\smash{\frac{1}{2\pi^2}}\widehat L\circ\widehat\lambda-2\hat C_2$ has image in $\smash{\frac{1}{12}}\Z\big/\Z=\Z/12\Z$.
As mentioned earlier $H_3(G)$ is divisible, which means that it has no nontrivial
finite quotient. Thus  $2\smash{\hat C_2}=-\smash{\frac{1}{2\pi^2}}\smash{\widehat L}\circ\smash{\widehat\lambda}$ as required.\end{proof}
The rest of this section is devoted to a proof of \fullref{jaja}
below, but in order to prove this theorem, we need to recall some
properties of $\hat C_2$ and the relationship between $\widehat \B(\C)$ and $\B(\C)$.

Recall from  \eqref{wigner} that $\Q/\Z$ can be regarded as a subgroup
of $H_3(\SL(2,\C))$. It is known that the restriction of $\hat C_2$ to this subgroup is
just the inclusion $\iota$ of $\Q/\Z$ in $\C/\Z$. In other words, we have a commutative diagram:
{\disablesubscriptcorrection\begin{equation}\label{csid}
\cxymatrix{{{\Q/\Z}\ar@{^{(}->}[r]\ar@{^{(}->}[rd]_\iota&{H_3(\SL(2,\C))}\ar[d]^{\hat C_2}\\{}&{\C/\Z}}}
\end{equation}}%
For a proof of this see \cite[Theorem 10.2, remarks on page 60]{Scissors}. 

Neumann shows in \cite[Corollary 8.3]{Neumann} that $\widehat \B(\C)$ and $\B(\C)$ are related by an exact sequence
\begin{equation}\label{nex}
\xymatrix{{0}\ar[r]&{\Q/\Z}\ar[r]^{\widehat \chi}&{\widehat \B(\C)}\ar[r]&{\B(\C)}\ar[r]&{0}}
\end{equation}
where $\widehat\chi$ is the map given by
\begin{equation*}
\widehat\chi(z)=[e^{2\pi iz};0,2]-[e^{2\pi iz};0,0].
\end{equation*}
\begin{thm}\label{jaja} The map $\widehat \lambda\co  H_3(\SL(2,\C))\to \widehat \B(\C)$ is surjective with kernel $\Z/2\Z$.
\end{thm}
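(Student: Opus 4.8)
The plan is to deduce the theorem by comparing the exact sequences \eqref{wigner} and \eqref{nex} via $\widehat\lambda$. First I would check that the square
\[\xymatrix{{H_3(\SL(2,\C))}\ar[r]^-{\widehat\lambda}\ar[d]&{\widehat\B(\C)}\ar[d]\\{H_3(\SL(2,\C))/(\Q/\Z)}\ar[r]^-\lambda&{\B(\C)}}\]
commutes, which is exactly the bottom-left portion of the big diagram from \fullref{blochgroup}; this follows because $h\circ\Psi$ realises the map $H_3(G)\to H_3(C_*^{\neq}(S^2)_G)$ defining $\lambda$ (the previous proposition with $v=\binom10$), and $\widehat\sigma$ lifts $\sigma$. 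Since $\lambda$ is surjective by \eqref{wigner}, a diagram chase shows $\widehat\lambda$ is surjective provided I can show the induced map on the subgroups $\Q/\Z$, namely $\Q/\Z\to\ker(\widehat\B(\C)\to\B(\C))=\Q/\Z$ (the copy $\widehat\chi(\Q/\Z)$ from \eqref{nex}), is surjective. This is the one place real work is needed.

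For that comparison I would evaluate both maps after composing with $-\frac1{2\pi^2}\widehat L$. On the $H_3$ side, the subgroup $\Q/\Z\subset H_3(\SL(2,\C))$ maps by $\hat C_2$ to the inclusion $\iota\co\Q/\Z\hookrightarrow\C/\Z$ by \eqref{csid}, so by \fullref{hovedsats} the composite $-\frac1{2\pi^2}\widehat L\circ\widehat\lambda$ restricted to $\Q/\Z$ is $2\iota$, i.e.\ $z\mapsto 2z\in\C/\Z$. On the $\widehat\B(\C)$ side, a direct computation of $-\frac1{2\pi^2}\widehat L$ on Neumann's generator $\widehat\chi(z)=[e^{2\pi iz};0,2]-[e^{2\pi iz};0,0]$ gives, using $\widehat L(w;p,q)-\widehat L(w;p,0)=\frac{\pi i}{2}q\Log w$ with $w=e^{2\pi iz}$ and $\Log w=2\pi i z$ (mod $2\pi i$), the value $-\frac1{2\pi^2}\cdot\frac{\pi i}{2}\cdot 2\cdot 2\pi i z=2z$ in $\C/\Z$. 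So $-\frac1{2\pi^2}\widehat L$ carries $\widehat\chi(\Q/\Z)$ isomorphically onto $2(\Q/\Z)=\Q/\Z\subset\C/\Z$, and since $-\frac1{2\pi^2}\widehat L\circ\widehat\lambda=2\hat C_2=2\iota$ on $\Q/\Z\subset H_3$, the image of $\Q/\Z$ under $\widehat\lambda$ lands in $\widehat\chi(\Q/\Z)$ and hits all of it; hence $\widehat\lambda$ restricted to $\Q/\Z$ is onto $\ker(\widehat\B(\C)\to\B(\C))$, and the diagram chase gives surjectivity of $\widehat\lambda$.

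It remains to identify the kernel. From the chase, $\ker\widehat\lambda$ sits in an exact sequence with $\ker(\lambda\circ(\text{quotient}))$ on $H_3$ and the kernels of the two vertical $\Q/\Z$'s; concretely, $\ker\widehat\lambda$ is the kernel of the map $\Q/\Z\to\Q/\Z$ induced by $\widehat\lambda$ between the distinguished subgroups, since $\lambda$ itself is an isomorphism modulo $\Q/\Z$ and $\widehat\lambda$ is injective on a complement. By the computation above this induced map is multiplication by $2$ on $\Q/\Z$ — indeed $\widehat\lambda|_{\Q/\Z}$ followed by the isomorphism $\widehat\chi^{-1}$ is $z\mapsto 2z$, whose kernel is $\frac12\Z/\Z\cong\Z/2\Z$. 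Therefore $\ker\widehat\lambda\cong\Z/2\Z$. The main obstacle is pinning down the normalisations so that both the $\hat C_2$ side (via \eqref{csid} and the factor $2$ in \fullref{hovedsats}) and the $\widehat\chi$ side produce exactly multiplication by $2$; once the arithmetic of $\widehat L$ on $\widehat\chi(z)$ is done carefully the rest is a formal diagram chase, and there is no geometry involved.
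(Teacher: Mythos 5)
Your argument is, in structure, the same as the paper's: both reduce everything to identifying the restriction of $\widehat\lambda$ to the subgroup $\Q/\Z\subset H_3(\SL(2,\C))$ with $2\widehat\chi$, using \eqref{csid}, \fullref{hovedsats} and an explicit evaluation of $-\frac{1}{2\pi^2}\widehat L$ on $\widehat\chi(z)$, and then read off surjectivity (from divisibility of $\Q/\Z$) and the kernel $\Z/2\Z$ by comparing the exact sequences \eqref{wigner} and \eqref{nex}. The paper phrases the endgame as an explicit element chase ($\widehat\lambda(x-\tfrac12 z)=\alpha$, and $0=2\hat C_2(\alpha)=2\alpha$ for $\alpha\in\ker\widehat\lambda$) rather than a snake-lemma statement, but that is a cosmetic difference.

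There is, however, one arithmetic slip that matters for your logic. You compute
\[-\frac{1}{2\pi^2}\cdot\frac{\pi i}{2}\cdot 2\cdot 2\pi i\,z\]
and call it $2z$; it is in fact $z$, i.e.\ $-\frac{1}{2\pi^2}\widehat L\circ\widehat\chi=\iota$, not $2\iota$ (this is also what the paper asserts). The distinction is not harmless: your deduction that $\widehat\chi^{-1}\circ\widehat\lambda|_{\Q/\Z}$ is multiplication by $2$ requires that $-\frac{1}{2\pi^2}\widehat L$ be \emph{injective} on $\widehat\chi(\Q/\Z)$, so that the identity $-\frac{1}{2\pi^2}\widehat L\circ\widehat\lambda|_{\Q/\Z}=2\iota$ pins the map down. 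If the value were really $2z$, the composite $z\mapsto 2z$ on $\Q/\Z$ would have a kernel of order $2$, your claim that $\widehat\chi(\Q/\Z)$ is carried ``isomorphically'' onto its image would be false, and you could only conclude $2\bigl(\widehat\chi^{-1}\widehat\lambda-\id\bigr)=0$ on $\Q/\Z$, which determines neither the kernel nor surjectivity. With the corrected value $\iota$ the injectivity holds, $\widehat\lambda|_{\Q/\Z}=2\widehat\chi$ follows, and the rest of your argument (kernel $=\ker(2\co\Q/\Z\to\Q/\Z)\cong\Z/2\Z$, cokernel trivial) is correct and coincides with the paper's proof.
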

\begin{proof}
Suppose $\widehat \lambda(\alpha)=0$. By composing with the map to $\B(\C)$, we see from \eqref{wigner} that $\alpha$ is in $\Q/\Z$. By \eqref{csid} and \fullref{hovedsats}, we have
\begin{equation*}
0=-\frac{1}{2\pi^2}\widehat L\circ\widehat\lambda(\alpha)=2\hat C_2(\alpha)=2\alpha.
\end{equation*}
Hence, $\alpha$ is either zero or the unique element in $\Q/\Z$ of order $2$.

Let $\alpha\in \widehat \B(\C)$. 
A simple calculation shows that we have
\begin{equation*} -\frac{1}{2\pi^2}\widehat L\circ\widehat\chi=\iota,
\end{equation*}
and using \eqref{csid} we get a commutative diagram:
{\disablesubscriptcorrection\begin{equation}\label{andet}
\cxymatrix{{{\Q/\Z}\ar[r]\ar[rd]_{2\widehat \chi}&{H_3(\SL(2,\C))}\ar[d]^{\widehat \lambda}\\&{\widehat \B(\C)}}}
\end{equation}}%
Let $\pi$ denote the natural map $\hatcB(\C)\to \B(\C)$, and let $x$ be an element in $H_3(\SL(2,\C))$ satisfying $\pi(\alpha)=\lambda(x)$. By \eqref{nex}, there exists $z$ in $\Q/\Z$ such that 
\begin{equation*}
\widehat \lambda(x)-\alpha=\widehat \chi(z),\end{equation*}
and by \eqref{andet}, we have $\widehat\lambda(x-\tfrac{1}{2}z)=\alpha$. 
\end{proof}

\section*{Appendix}\label{pendix}

We conclude by proving that our definition of $\hatcB(\C)$ is
equivalent to Neumann's definition of the more extended Bloch group
$\mathcal E\B(\C)$. This actually follows directly from the brief remark in parentheses on the bottom of page 417 in \cite{Neumann},
but we give the details to save the reader some trouble. Recall the definition of $\FT$ from \fullref{extb}.  Neumann defines
\begin{equation*}
  \FT^+:=\{(x_0,\dots,x_4)\in \FT\mid \Imag x_i>0\}
\end{equation*}
and defines $\widehat{\FT}_{00}$ to be the component of
the preimage of $\FT$ that contains all points
\begin{multline}\label{pq}
\big((x_0;p_0,q_0),(x_1;p_1,q_1),(x_2;p_1-p_0,q_2),\\(x_3;p_1-p_0+q_1-q_0,q_2-q_1),(x_4;q_1-q_0,q_2-q_1-p_0)\big)
\end{multline} 
with $(x_0,\dots,x_4)\in\FT^+$ and the $p_i$'s and $q_i$'s even integers. 
He then defines the more extended Bloch group $\mathcal E\B(\C)$, as in \fullref{ebgdefn}, to be  the abelian group generated by symbols $[z;p,q]$, subject to the relation
\[\sum_{i=0}^4(-1)^i[x_i;p_i,q_i]=0\textnormal{  for  }\big((x_0;p_0,q_0),\dots,(x_4;p_4,q_4)\big)\in\widehat{\FT}_{\text{00}}.\]
\begin{prop}
$\hatcB(\C)=\mathcal E\B(\C)$.
\end{prop}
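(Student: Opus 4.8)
The plan is to reduce everything to the set-theoretic identity $\widehat{\FT}=\widehat{\FT}_{00}$ inside $\widehat\C\times\dots\times\widehat\C$. Granting this, the two groups agree at once: $\hatcB(\C)$ and $\mathcal E\B(\C)$ are the kernels of one and the same homomorphism $\widehat\nu$, defined on the abelian group with generators $[z;p,q]$, $(z;p,q)\in\widehat\C$, and relations $\sum_{i=0}^4(-1)^i[x_i;p_i,q_i]=0$ indexed by $\widehat{\FT}$ in one case and by $\widehat{\FT}_{00}$ in the other, so equal index sets leave nothing to prove. Now both $\widehat{\FT}$ and $\widehat{\FT}_{00}$ are, by definition, connected components of the preimage of $\FT$ under the covering $\widehat\C^5\to(\Cremove)^5$; since distinct components are disjoint, it suffices to produce one point lying in both, and to do that it is enough to join, by a path staying in the preimage of $\FT$, a point of $\widehat{\FT}$ to a point of $\widehat{\FT}_{00}$.

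First I would take any real tuple $(x_0,\dots,x_4)\in\FT_0$, so that $0<x_1<x_0<1$. From $x_2=x_1/x_0$, $x_3=\frac{1-x_0^{-1}}{1-x_1^{-1}}$ and $x_4=\frac{1-x_0}{1-x_1}$ one checks directly that all five $x_i$ lie in $(0,1)$, hence avoid the branch cuts $(-\infty,0)$ and $(1,\infty)$; therefore $P:=\big((x_0;0,0),\dots,(x_4;0,0)\big)$ is a well-defined point of $\widehat\C^5$ lying in $\widehat{\FT}$ by definition. Next I would deform this tuple into the upper half-plane: set $x_0(t)=x_0+it\alpha$, $x_1(t)=x_1+it\beta$ for $t\in[0,1]$, and let $x_2(t),\dots,x_4(t)$ be the corresponding cross-ratio parameters. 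A short first-order computation of the imaginary parts shows that, choosing $\alpha>0$ and $\beta$ large enough relative to $\alpha$ (but both small), one gets $\Imag x_i(t)>0$ for all $i$ and all $t\in(0,1]$ — the only constraints that matter are $\beta/\alpha>x_1/x_0$ and $\beta/\alpha>(1-x_1)/(1-x_0)$, the one for $x_3$ being automatic since $x_3=x_2x_4$ with $x_2,x_4$ of positive real part. Thus $\big(x_0(t),\dots,x_4(t)\big)\in\FT^+$ for $t\in(0,1]$, and no $x_i(t)$ ever touches a cut. Lifting the path to $\widehat\C^5$ with initial point $P$ then keeps every integer coordinate equal to $0$, so the endpoint is $\big((x_0(1);0,0),\dots,(x_4(1);0,0)\big)$, which is exactly a point of the form \eqref{pq} with base tuple in $\FT^+$ and all $p_i=q_i=0$, hence a point of $\widehat{\FT}_{00}$. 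As the whole path lies in the preimage of $\FT$, its initial point $P$ lies in the same component $\widehat{\FT}_{00}$; together with $P\in\widehat{\FT}$ this gives $\widehat{\FT}=\widehat{\FT}_{00}$, and the proposition follows.

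The only real content is the elementary imaginary-part estimate showing that a tuple of $\FT_0$ can be pushed into $\FT^+$ without crossing a branch cut; everything else is the formal topology of covering spaces plus the remark that zeroing the parameters in \eqref{pq} reproduces the naive lift $(x_i;0,0)$. I therefore expect the only thing needing care to be the bookkeeping of the five simultaneous sign conditions in that perturbation — routine, but easy to botch.
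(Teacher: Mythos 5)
Your argument is correct, and its core is the same as the paper's: exhibit a single point common to the two components by deforming a tuple of $\FT_0$ into $\FT^+$ along a path that never meets the cuts $(-\infty,0)$ and $(1,\infty)$, so that the naive lift $(x_i;0,0)$ is carried to a naive lift of a point of $\FT^+$, which is the case $p_0=q_0=p_1=q_1=q_2=0$ of \eqref{pq}. The difference is one of logical economy. The paper spends most of its proof on the monodromy computation (tracking how the $p_i,q_i$ change as $x_0$ and $x_1$ wind around $0$, $1$ and each other), which shows directly that \emph{every} point of the form \eqref{pq} lies in the component of the base point; the connection to $\widehat{\FT}$ is then a one-sentence remark at the end. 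You invert the emphasis: since both $\widehat{\FT}$ and $\widehat{\FT}_{00}$ are by definition single components of the same covering, one common point suffices, and you make that connecting path completely explicit (your sign conditions $\beta/\alpha>x_1/x_0$ and $\beta/\alpha>(1-x_1)/(1-x_0)$ check out, the imaginary parts of $x_2(t)$ and $x_4(t)$ being exactly linear in $t$, and $x_3=x_2x_4$ with both factors of positive real and imaginary part handles the last case). What you give up is that your argument leans on the well-posedness of Neumann's definition -- namely that all the points \eqref{pq}, for all choices of even $p_i,q_i$, really do lie in one component -- which the paper's curve-tracing re-verifies rather than imports. Granting that (as is reasonable, since it is part of the definition being compared), your proof is complete and in fact supplies the detail the paper elides in its final sentence.
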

This follows immediately from the following lemma.
\begin{lemma}\label{applemma}
$\widehat\FT_{\textnormal{00}}=\widehat\FT$.
\end{lemma}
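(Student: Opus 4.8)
The plan is to use that $\widehat\FT$ and $\widehat\FT_{00}$ are, by construction, connected components of one and the same covering space: the restriction, to the preimage of $\FT$, of the covering map $\widehat\C\times\dots\times\widehat\C\to(\Cremove)^5$ (the fivefold product of $\widehat\C\to\Cremove$). Since $\FT$ is homeomorphic, via the projection to its first two coordinates, to the open connected set $\{(x,y)\in\C^2\mid x,y\notin\{0,1\},\ x\ne y\}$, it is locally path-connected, and hence so is this covering space; therefore its connected components coincide with its path-components, and two components that meet must be equal. It thus suffices to exhibit a single point lying in both $\widehat\FT$ and $\widehat\FT_{00}$.

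I would take this point to be $\big((x_0;0,0),\dots,(x_4;0,0)\big)$ lying over a carefully chosen tuple $(x_0,\dots,x_4)\in\FT^+$; such a point is one of those listed in \eqref{pq} (take $p_0=p_1=q_0=q_1=q_2=0$), hence lies in $\widehat\FT_{00}$ by definition. To produce it, fix reals $0<b<a<1$ and set $x=a+i\epsilon^2$, $y=b+i\epsilon$, with associated tuple $\big(x,\,y,\,y/x,\,(1-x^{-1})/(1-y^{-1}),\,(1-x)/(1-y)\big)$. Two of its coordinates have imaginary parts $\epsilon^2$ and $\epsilon$, and a short computation of the remaining three imaginary parts shows that all five are strictly positive once $\epsilon>0$ is small enough; thus the tuple lies in $\FT^+$ for such $\epsilon$. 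At $\epsilon=0$ the tuple becomes the element of $\FT_0$ with first two coordinates $a$ and $b$, and an elementary inequality check using $0<b<a<1$ shows that then all five of its coordinates lie in $(0,1)$.

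The decisive observation is that along the path $\epsilon\mapsto(x,y)$, $\epsilon\in[0,\epsilon_0]$ with $\epsilon_0$ small, each of the five coordinate functions stays off the branch cuts $(-\infty,0)$ and $(1,\infty)$ of $\widehat\C$: for $\epsilon>0$ because each coordinate is non-real, and at $\epsilon=0$ because each coordinate lies in $(0,1)$. Hence in each coordinate the path runs inside the simply-connected region $\C\backslash\big((-\infty,0]\cup[1,\infty)\big)$, over which $\widehat\C\to\Cremove$ is a trivial covering whose sheets are indexed by the integer pairs $(p,q)$. Consequently the lift of the path beginning at $\big((x_0(\epsilon_0);0,0),\dots,(x_4(\epsilon_0);0,0)\big)$ keeps all the integer labels constantly equal to $0$, and so ends at $\big((x_0(0);0,0),\dots,(x_4(0);0,0)\big)$, which is one of the defining points of $\widehat\FT$. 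As the two endpoints of a path lie in the same path-component, this defining point of $\widehat\FT$ lies in $\widehat\FT_{00}$ as well, and therefore $\widehat\FT=\widehat\FT_{00}$. The one step that is not pure covering-space bookkeeping — and the main, though short, obstacle — is verifying that this path avoids the cuts in all five coordinates, i.e.\ the positivity of the three nontrivial imaginary parts for small $\epsilon$ together with the fact that the five real limits at $\epsilon=0$ land in $(0,1)$.
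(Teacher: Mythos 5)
Your proof is correct and takes essentially the same route as the paper's: the paper's argument likewise concludes by joining the zero-labelled lift of a point of $\FT^+$ to the zero-labelled lift of a point of $\FT_0$ by a path avoiding the cuts (``sliding $x_0$ down to $(0,1)$ and then doing the same with $x_1$''), and your explicit path $\epsilon\mapsto(a+i\epsilon^2,b+i\epsilon)$ simply makes that step precise. The monodromy computation that occupies most of the paper's proof only verifies that all the points \eqref{pq} lie in a single component (ie that Neumann's $\widehat\FT_{00}$ is well defined) and is not needed for the equality itself, so omitting it is harmless.
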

\begin{proof}
Let $(x_0,\dots,x_4)$ be a fixed point in $\FT^+$ and let \[P=((x_0;0,0),\dots,(x_4;0,0))\in \widehat \FT_{\text{00}}.\] Consider 
the curve in $\widehat \FT_{\text{00}}$ starting in $P$ obtained by keeping $x_1$ fixed and letting $x_0$ move along a closed curve in $\C-\{0,1,x_1\}$.
By a simple analysis of the five term relation, we can examine exactly how the values of the $p_i$'s and $q_i$'s change when $x_0$ moves around. This is indicated in Figure $2$. 
\begin{figure}[ht!]
        \begin{center}
                \includegraphics[scale=0.9]{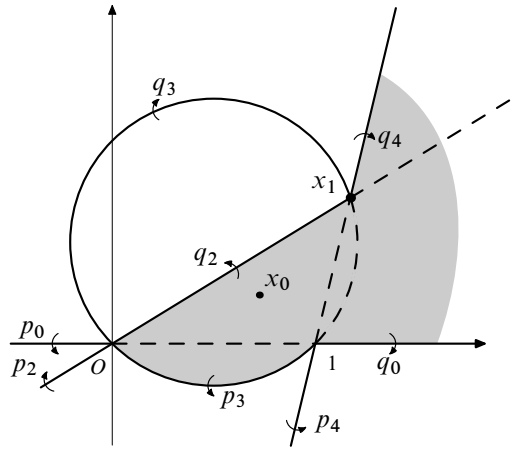}
                \caption{The lines in the figure are the cuts of the
                  function sending $z$ to $(\Log(z),\Log(\frac{1}{1-z}))$ in the
                  $x_i$--plane, $i=0,2,3,4$, when $y=x_1$ is fixed.
 The relevant values of $p_i$ and $q_i$
                  increase by $2$ whenever $x_0$ crosses the relevant
                  line in the direction indicated by the arrows.}
        \end{center}
\end{figure} 
We see that if $x_0$ traverses a closed curve going $p_0$ times counterclockwise around the origin, followed by $q_0$ times clockwise around $1$, followed by $r$ times clockwise around $x_1$, then the curve in $\widehat \FT_{\text{00}}$ ends in
\begin{multline*} \big((x_0;2p_0,2q_0),(x_1;0,0),(x_2;-2p_0,2p_0+2r),\\(x_3;-2p_0-2q_0,2p_0+2r),(x_4;-2q_0,2r)\big).\end{multline*}
If we start in this point and then follow the curve in $\widehat\FT_{\text{00}}$ obtained by keeping $x_0$ fixed and letting $x_1$ traverse a curve going $p_1$ times counterclockwise around the origin followed by $q_1$ times clockwise around $1$, a similar study shows that we end up at the point    
\begin{multline*} Q=\big((x_0;2p_0,2q_0),(x_1;2p_1,2q_1),(x_2;-2p_0+2p_1,2p_0+2r),\\(x_3;-2p_0-2q_0+2p_1+2q_1,2p_0-2q_1+2r),(x_4;-2q_0+2q_1,-2q_1+2r)\big).\end{multline*}
By letting $q_2=2r+2p_0$ we see that $Q$ is of the form \eqref{pq}. Since we can connect $P$ to a point in $\widehat \FT$ by first sliding $x_0$ down to the interval $(0,1)$ and then doing the same with $x_1$, the lemma follows.
\end{proof}

\bibliographystyle{gtart}
\bibliography{link}

\end{document}